\setlist[itemize]{leftmargin=25pt}
\setlist[enumerate]{leftmargin=25pt}
\pgfplotsset{every axis/.append style={tick label style={/pgf/number format/fixed},font=\scriptsize,ylabel near ticks,xlabel near ticks,grid=major}}
\newcommand{\leqnomode}{\tagsleft@true}
\newcommand{\reqnomode}{\tagsleft@false}
\newtheorem{theorem}{Theorem}[section]
\newtheorem{theoremLetter}{Theorem}
\newtheorem{lemma}[theorem]{Lemma}
\newtheorem{prop}[theorem]{Proposition}
\theoremstyle{definition}
\theoremstyle{remark}
\newtheorem{remark}[theorem]{Remark}
\numberwithin{equation}{section}
\DeclareMathOperator*{\esssup}{ess\,sup}
\DeclareMathOperator*{\essinf}{ess\,inf}
\let \la=\lambda
\let \e=\varepsilon
\let \o=\omega
\let \a=\alpha
\let \f=\varphi
\let \si=\sigma
\let \ga=\gamma
\def\avint_#1{\mathchoice{\mathop{\kern 0.2em\vrule width 0.6em height 0.69678ex depth -0.58065ex \kern -0.8em \intop}\nolimits_{\kern -0.4em#1}}{\mathop{\kern 0.1em\vrule width 0.5em height 0.69678ex depth -0.60387ex \kern -0.6em \intop}\nolimits_{#1}} {\mathop{\kern 0.1em\vrule width 0.5em height 0.69678ex depth -0.60387ex \kern -0.6em \intop}\nolimits_{#1}} {\mathop{\kern 0.1em\vrule width 0.5em height 0.69678ex depth -0.60387ex \kern -0.6em \intop}\nolimits_{#1}}}
\begin{document}
\title[Weighted weak type inequalities]
{On some improved weighted weak type inequalities}

\author[A.K. Lerner]{Andrei K. Lerner}
\address[A.K. Lerner]{Department of Mathematics,
Bar-Ilan University, 5290002 Ramat Gan, Israel}
\email{lernera@math.biu.ac.il}

\author[K. Li]{Kangwei Li}
\address[K. Li]{Center for Applied Mathematics, Tianjin University, Weijin Road 92, 300072 Tianjin, China}
\email{kli@tju.edu.cn}

\author[S. Ombrosi]{Sheldy Ombrosi}
\address[S. Ombrosi]{Departamento de Análisis Matemático y Matemática Aplicada\\ Universidad Complutense (Spain) \&
Departamento de Matemática e Instituto de Matemática. Universidad Nacional del Sur - CONICET Argentina}
\email{sombrosi@ucm.es}

\author[I.P. Rivera-R\'ios]{Israel P. Rivera-R\'ios}
\address[I.P. Rivera-R\'ios]{Departamento de An\'alisis Matem\'atico, Estad\'istica e Investigaci\'on Operativa
y Matem\'atica Aplicada. Facultad de Ciencias. Universidad de M\'alaga (M\'alaga, Spain).}
\email{israelpriverarios@uma.es}

\thanks{The first author was supported by ISF grant no. 1035/21. The second author was supported by the National Natural Science Foundation of China through project numbers 12222114 and 12001400.
The third author was supported by Spanish Ministerio de Ciencia e Innovaci\'on grant PID2020-113048GB-I00.
The fourth author was supported by Spanish Ministerio de Ciencia e Innovaci\'on grant PID2022-136619NB-I00 and by Junta de Andaluc\'ia grant FQM-354.}

\begin{abstract}
In this paper we obtain the sharp quantitative matrix weighted weak type bounds for the Christ--Goldberg maximal operator $M_{W,p}$ in the case $1<p<2$, improving a recent
result by Cruz-Uribe and Sweeting \cite{CUS23}. Also, in the scalar setting, we improve a weak type bound obtained in \cite{CUS23} for Calder\'on--Zygmund operators.
\end{abstract}

\keywords{Matrix weights, quantitative bounds, weak type estimates.}
\subjclass[2020]{42B20, 42B25}

\maketitle
\section{Introduction}
In this paper we consider weighted weak type inequalities of the form
\begin{equation}\label{sw}
|\{x\in {\mathbb R}^d:|w(x)^{1/p}T(fw^{-1/p})(x)|>\a\}|\le\frac{C}{\a^p}\int_{{\mathbb R}^d}|f|^pdx,
\end{equation}
both in the scalar and matrix settings. Here $1\le p<\infty$ and $T$ is a given operator.

Suppose first that $w$ is a scalar weight, that is, $w$ is a non-negative locally integrable function on ${\mathbb R}^d$.
In this case inequalities (\ref{sw}) were first considered by Muckenhoupt and Wheeden in \cite{MW77}, and then studied by many authors, see, e.g., \cite{CUMP05, LOP19, S85}.

Observe that there are more standard weighted weak type inequalities of the form
\begin{equation}\label{nw}
w\{x\in {\mathbb R}^d:|T(f)(x)|>\a\}|\le\frac{C}{\a^p}\int_{{\mathbb R}^d}|f|^pwdx,
\end{equation}
where $w(E):=\int_Ewdx$ for a measurable set $E\subset {\mathbb R}^d$. Even though inequalities of the form (\ref{sw}) are interesting in their own right, they are relatively exotic
compared to (\ref{nw}).

The situation is different in the matrix weight setting. Suppose that $w=W$ is a matrix weight, that is, $W$ is an $n\times n$ self-adjoint matrix function with locally
integrable entries such that $W(x)$ is positive definite for a.e. $x\in {\mathbb R}^d$. For $f: {\mathbb R}^d\to {\mathbb C}^n$ and a linear operator~$T$, define $T(f)$ componentwise.
Then the strong $L^p(W)$ boundedness of~$T$ means that
$$\int_{{\mathbb R}^d}|W(x)^{1/p}T(fW^{-1/p})(x)|^pdx\le C\int_{{\mathbb R}^d}|f|^pdx,$$
and we see that (\ref{sw}) is its natural weak type counterpart, while (\ref{nw}) is meaningless in the matrix setting.

In what follows, we assume that $T$ is a Dini-continuous Calder\'on--Zygmund operator. Given a matrix weight $W$ and $1\le p<\infty$, define
$$T_{W,p}f(x):=W(x)^{1/p}T(fW^{-1/p})(x).$$
We also consider the Christ--Goldberg maximal operator defined by
$$M_{W,p}f(x):=\sup_{Q\ni x}\frac{1}{|Q|}\int_Q|W^{1/p}(x)W^{-1/p}(y)f(y)|dy.$$
For $p=2$ this operator was defined by Christ--Goldberg \cite{CG01} and for $p>1$ by Goldberg~\cite{G03}.

Quantitative matrix weighted inequalities of the form (\ref{sw}) were first considered by Cruz-Uribe et al. \cite{CUIMPRR21} in the case $p=1$.
In a very recent work by Cruz-Uribe and Sweeting \cite{CUS23}, the results of \cite{CUIMPRR21} have been extended to the case $p>1$.
Both results in \cite{CUIMPRR21, CUS23} can be formulated as follows.

\begin{theoremLetter}[\cite{CUIMPRR21, CUS23}]\label{thm:A} Let $1\le p<\infty$. Then
\begin{equation}\label{weakmatrix}
\|T_{W,p}f\|_{L^{p,\infty}}\lesssim [W]_{A_p}^{1+\frac{1}{p}}\|f\|_{L^p},
\end{equation}
and the same bound holds for $M_{W,p}$.
\end{theoremLetter}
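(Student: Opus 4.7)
The plan is to reduce the matrix statement to a scalar one by combining sparse/pointwise domination with the reducing-operator machinery, and then prove the weak type estimate at the scalar sparse level via a Calder\'on--Zygmund decomposition of $|f|^p$.

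First I would handle $M_{W,p}$, since it is typically the easier of the two and will motivate the exponent $1+\frac1p$. For a fixed cube $Q\ni x$, one replaces $W^{1/p}(x)W^{-1/p}(y)$ by $W^{1/p}(x)\mc W_Q^{-1}\mc W_Q W^{-1/p}(y)$, where $\mc W_Q$ is a reducing matrix for $W^{-1/p}$ on $Q$, i.e.\ $|\mc W_Q v|\sim\has{\frac{1}{|Q|}\int_Q|W^{-1/p}(y)v|^{p'}\dd y}^{1/p'}$. The matrix $A_p$ hypothesis then gives $\|W^{1/p}(x)\mc W_Q^{-1}\|_{\mathrm{op}}\lesssim [W]_{A_p}^{1/p}$ for a.e.\ $x\in Q$, so pulling this factor out of the average reduces the problem to the scalar bound
\[
\sup_{Q\ni x}\frac{1}{|Q|}\int_Q |\mc W_Q W^{-1/p}(y)f(y)|\dd y
\lesssim [W]_{A_p}^{1/p}\,\wid M_{W,p}f(x),
\]
against a scalar sparse/maximal operator whose ``weight'' is the scalar Muckenhoupt-type quantity associated with the trace of the reducing operators. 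Combined with the classical Muckenhoupt $A_p$ weak-type inequality for $M$ (giving a factor $[W]_{A_p}$) this yields the advertised $[W]_{A_p}^{1+1/p}$.

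For $T_{W,p}$ I would invoke the convex body sparse domination of Nazarov--Petermichl--Treil--Volberg: there is a sparse family $\mc S$ with
\[
|T(fW^{-1/p})(x)|\lesssim \sum_{Q\in\mc S}\ind_Q(x)\,\nrm{\langle\!\langle fW^{-1/p}\rangle\!\rangle_Q},
\]
where $\langle\!\langle \cdot\rangle\!\rangle_Q$ denotes the convex-body average over $Q$. Multiplying by $W^{1/p}(x)$ and again inserting $\mc W_Q^{-1}\mc W_Q$, the $A_p$ condition contributes a factor $[W]_{A_p}^{1/p}$ and leaves a scalar sparse operator of the form $\mc A_{\mc S}f(x)=\sum_{Q\in\mc S}\ind_Q(x)\langle h\rangle_Q$ acting on a scalar function $h$ controlled by $|f|$ together with a scalar $A_p$ weight built from $\mc W_Q$.

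The remaining, and most delicate, step is the weak type bound for the scalar sparse operator with the right power of $[W]_{A_p}$. Here one performs a Calder\'on--Zygmund decomposition of $|f|^p$ at level $\alpha^p$, writing $f=g+b$ with $g$ bounded and $b$ supported on a disjoint family of cubes $\{Q_j\}$ with $|f|^p$ having controlled average on each $Q_j$. The good part is handled by Chebyshev together with the strong $L^p(W)$ bound for $T_{W,p}$ and $M_{W,p}$, which costs only $[W]_{A_p}^{1/p}$. The bad part is controlled by exploiting the sparse structure outside $\bigcup Q_j$ and, on the exceptional set, by the trivial measure bound $|\bigcup Q_j|\le\alpha^{-p}\|f\|_{L^p}^p$; combining with the $A_p$-based scalar sparse testing inequality produces the remaining factor $[W]_{A_p}$.

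The principal obstacle is to set up the reducing-operator reduction so that the constant $[W]_{A_p}^{1/p}$ factors cleanly out of both the convex body average and the subsequent scalar sparse estimate; getting this factorization sharp (rather than losing logarithms or extra exponents) is the technical heart of the argument, and is exactly where the cases $p=1$ \cite{CUIMPRR21} and $p>1$ \cite{CUS23} diverge in the details.
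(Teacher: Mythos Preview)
Theorem~\ref{thm:A} is not proved in this paper; it is quoted from \cite{CUIMPRR21,CUS23} as the background that Theorems~\ref{swtb} and~\ref{czop} improve upon. So there is no proof here to compare against directly. That said, your proposal has a genuine gap that would derail it regardless of the source.

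The central error is the claim that the matrix $A_p$ hypothesis yields $\|W^{1/p}(x)\mc W_Q^{-1}\|_{\mathrm{op}}\lesssim [W]_{A_p}^{1/p}$ for a.e.\ $x\in Q$. For $p>1$ this pointwise bound is simply false: the $A_p$ condition is an \emph{averaged} condition, and what one actually has (see Proposition~\ref{pr2} and equation~\eqref{rh}) is only
\[
\Big(\frac{1}{|Q|}\int_Q\|W^{1/p}(x)V_{Q,p}\|^{p}\,dx\Big)^{1/p}\lesssim [W]_{A_p}^{1/p}.
\]
You cannot pull this factor out of the supremum over cubes as a constant depending on $x$; the whole difficulty is that $\|W^{1/p}(x)V_{Q,p}\|$ varies with both $x$ and $Q$. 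The same mis-step recurs in your treatment of $T_{W,p}$. A secondary issue: in your Calder\'on--Zygmund step you assert that the strong $L^p$ bound on the good part ``costs only $[W]_{A_p}^{1/p}$'', but the known strong bounds are $[W]_{A_p}^{1/(p-1)}$ for $M_{W,p}$ and $[W]_{A_p}^{1+\frac{1}{p(p-1)}}$ for $T_{W,p}$, so this route would not recover the exponent $1+\frac1p$ either.

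The method the paper uses to prove the \emph{stronger} bound in Theorem~\ref{swtb} shows how to handle this correctly: one applies the sparse bound of Lemma~\ref{spb} with $r=p$, so that $\la_R(x):=\|W^{1/p}(x)V_{R,p}\|^p$ appears as an $x$-dependent coefficient in a scalar sparse operator $T_{\la,\mc S}$ acting on $|f|^p$. The weak $(1,1)$ bound for $T_{\la,\mc S}$ (Lemma~\ref{T}) then requires only control of $\big(\frac{1}{|R|}\int_R\la_R^s\big)^{1/s}$ for some $s>1$, which \emph{is} available from the $A_p$ condition together with a reverse H\"older inequality. This keeps the $x$-dependence inside the analysis rather than discarding it at the outset.
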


In \cite{LLOR23}, the authors showed that in the case $p=1$ the quadratic dependence on $[W]_{A_1}$ in (\ref{weakmatrix}) is best possible both for $T_{W,1}$ and $M_{W,1}$.

Suppose now that $p>1$. Consider Theorem \ref{thm:A} for $M_{W,p}$ in the matrix case and for $T_{w,p}$ in the scalar case.
From the known strong type bound for $M_{W,p}$ (which is due to Buckley~\cite{B93} in the scalar case, and Isralowitz--Moen \cite{IM19} in the matrix case)
one can conclude that
\begin{equation}\label{trwt}
\|M_{W,p}f\|_{L^{p,\infty}}\lesssim [W]_{A_p}^{\frac{1}{p-1}}\|f\|_{L^p}\quad (1<p<\infty).
\end{equation}
Also, by the $A_2$ theorem of Hyt\"onen \cite{H12}, the same bound holds for $T_{w,p}$ when $1<p\le 2$. Therefore, in both cases Theorem \ref{thm:A}
provides a new bound for $p$ satisfying $1+\frac{1}{p}<\frac{1}{p-1}$, namely, for $1<p<\frac{1+\sqrt{5}}{2}$.
It was conjectured in \cite{CUS23} that in this range (\ref{weakmatrix}) is sharp in both cases considered above.

We will show that this conjecture is not true. For $M_{W,p}$ we obtain the sharp weak $L^p$ bound for all $1<p<2$. For $T_{w,p}$ we also obtain an improvement of
Theorem \ref{thm:A} but our new bound is probably not optimal. In order to state the sharpness part of our results, let us define in the scalar setting
$$\f_{M_{p}}(t):=\sup_{[w]_{A_p}\le t}\|M_{w,p}\|_{L^p\to L^{p, \infty}}\quad(t\ge 1).$$
In the similar way define $\f_{T_{p}}(t)$. Our results read as follows.

\begin{theorem}\label{swtb} For all $1<p<2$,
$$
\|M_{W,p}f\|_{L^{p,\infty}}\lesssim [W]_{A_p}^{\frac{2}{p}}\|f\|_{L^p},
$$
and, moreover, this bound is sharp in the sense that $\f_{M_{p}}(t)\gtrsim t^{\frac{2}{p}}$ for all $t\ge 1$.
\end{theorem}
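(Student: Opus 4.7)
The plan is to handle the upper bound and the sharpness statement separately. For the upper bound my strategy is a convexity reduction via Jensen's inequality, which turns the weak $L^p$ estimate into a weak $(1,1)$-type estimate for a $p=1$-style auxiliary operator; for the sharpness I use a scalar power-weight construction.

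For the upper bound, in the scalar case $W=w$, Jensen's inequality applied to $t\mapsto t^p$ yields the pointwise bound
$$
M_{w,p}f(x)^p = w(x)\ha*{M(fw^{-1/p})(x)}^p \le w(x)\,M\ha*{\abs{f}^p w^{-1}}(x) = M_{w,1}(\abs{f}^p)(x),
$$
so that $\cbrace*{M_{w,p}f>\a}\subseteq\cbrace*{M_{w,1}(\abs{f}^p)>\a^p}$. The task then reduces to the weak $(1,1)$ bound
$$
\abs*{\cbrace*{M_{w,1}h>\b}}\lesssim [w]_{A_p}^{2}\,\b^{-1}\nrm{h}_{L^1}, \qquad w\in A_p,\ 1<p<2.
$$
The analogous matrix reduction (Jensen plus the operator-norm bound for the inner integrand) produces a scalar-valued auxiliary operator with two-variable kernel $\nrm{W^{1/p}(x)W^{-1/p}(y)}_{\mathrm{op}}^{p}$, to which the same weak $(1,1)$ strategy applies. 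To prove the weak $(1,1)$ estimate I would perform a Calder\'on--Zygmund decomposition of $h$ at level $\b/[w]_{A_p}$: on the bad part the matrix $A_p$ testing condition controls the relevant kernel averages by $[w]_{A_p}$, while on the good part Chebyshev together with the strong $L^p$ bound of Buckley (respectively Isralowitz--Moen in the matrix case) suffices. The squared factor $[w]_{A_p}^{2}$ arises naturally from balancing the CZ-threshold against the $A_p$-testing contribution on the bad cubes.

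For the sharpness I would specialise to $W = w I$ in dimension one, so that $M_{W,p}$ reduces to $w^{1/p}M(\cdot\,w^{-1/p})$. Choosing a power weight with $[w]_{A_p}\simeq \e^{-(p-1)}$ and testing against an appropriate scalar function, in the spirit of the $p=1$ construction in \cite{LLOR23}, one expects to witness a ratio $\nrm{M_{w,p}f}_{L^{p,\infty}}/\nrm{f}_{L^p}$ of order $\e^{-2(p-1)/p} = [w]_{A_p}^{2/p}$, matching the upper bound.

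The main obstacle I anticipate is the weak $(1,1)$ bound under only the $A_p$ hypothesis: the $p=1$ argument in \cite{CUIMPRR21} crucially uses the $A_1$-specific pointwise control $\langle w\rangle_Q\le [w]_{A_1}w(y)$ a.e., which is unavailable in the $A_p$ regime and must be replaced by a more delicate use of the two-sided $A_p$ testing together with reverse H\"older estimates. A further subtlety in the matrix setting is handling the passage from vector to operator norms without losing sharp constants in the application of the matrix $A_p$ testing.
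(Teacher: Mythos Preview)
Your Jensen reduction is valid and matches the paper's first manoeuvre in spirit. The gap is in the weak $(1,1)$ bound for the resulting auxiliary maximal operator with kernel $\|W^{1/p}(x)W^{-1/p}(y)\|^p$. A direct Calder\'on--Zygmund decomposition does not work here: because the kernel depends on $y$, neither the mean-zero cancellation of the bad part (which needs a \emph{linear} operator so that $\int_{Q_j} b_j=0$ kills the contribution) nor the unweighted maximal-function trick of replacing $h$ by its cube averages on the stopping cubes (which relies on $\int_{Q_j}h=\int_{Q_j}\tilde h$ with a $y$-independent kernel) is available. Your proposed use of Buckley/Isralowitz--Moen on the good part is also off target: that bound is for $M_{W,p}$ on $L^p$, not for the auxiliary operator, and $W\in A_p$ does not furnish a strong $L^q$ bound for the latter. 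The paper closes this gap by first \emph{linearizing} via a sparse stopping-time argument with reducing operators (Lemma~\ref{spb} and Proposition~\ref{pr1}), obtaining $M^d_{Q,W,p}f(x)^p \lesssim T_{\lambda,\mathcal S}(|f|^p)(x)$ for a linear sparse operator $T_{\lambda,\mathcal S}$ with coefficients $\lambda_R(x)=\|W^{1/p}(x)V_{R,p}\|^p$. For this linear operator the CZ bad part genuinely vanishes off the exceptional set, and a direct strong-type bound on $L^{(2r')'}$ (Lemma~\ref{T}) combined with reverse-H\"older control of $\lambda_R$ in $L^r$ (Proposition~\ref{pr2}) yields $\|T_{\lambda,\mathcal S}\|_{L^1\to L^{1,\infty}}\lesssim [W]_{A_p}^2$.

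Your sharpness plan is also flawed: power weights do \emph{not} saturate the exponent $2/p$ for this weak-type inequality, and the construction in \cite{LLOR23} is not a power weight. Both \cite{LLOR23} (for $p=1$) and the present paper build an elaborate periodic step-function weight: for each large $N$ one obtains a weight $w$ with $[w]_{A_p}\simeq N$ such that $w^{1/p}(x)>x$ on a union of intervals in $(1,\infty)$ of total measure $\simeq N^2$. Testing with $f=\chi_{[0,1]}$ then gives $\|w^{1/p}M(\chi_{[0,1]})\|_{L^{p,\infty}}^p\gtrsim N^2$, hence $\|M_{w,p}\|_{L^p\to L^{p,\infty}}\gtrsim N^{2/p}\simeq [w]_{A_p}^{2/p}$. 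The bulk of the work is the verification that $[w]_{A_p}\simeq N$, which requires a careful case analysis of intervals against the step structure.
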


\begin{theorem}\label{czop}
For all $1<p<2$,
\begin{equation}\label{czrs}
\|T_{w,p}f\|_{L^{p,\infty}}\lesssim [w]_{A_p}^{1+\frac{1}{p^2}}\big(\log([w]_{A_p}+e)\big)^{\frac{1}{p}}\|f\|_{L^p}.
\end{equation}
\end{theorem}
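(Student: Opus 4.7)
Setting $g = f w^{-1/p}$ so that $\|f\|_{L^p} = \|g\|_{L^p(w)}$, the inequality (\ref{czrs}) is equivalent to the Muckenhoupt--Wheeden-type weak bound
\[
\alpha\,\bigl|\{x\in\mathbb{R}^d: w(x)^{1/p}|Tg(x)|>\alpha\}\bigr|^{1/p}\lesssim [w]_{A_p}^{1+\frac{1}{p^2}}\bigl(\log([w]_{A_p}+e)\bigr)^{1/p}\|g\|_{L^p(w)}.
\]
By the pointwise sparse domination available for Dini-continuous Calder\'on--Zygmund operators, it suffices to prove the analogous bound with $T$ replaced by a sparse form $A_{\mathcal{S}}h=\sum_{Q\in\mathcal{S}}\langle h\rangle_Q\ind_Q$.

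Fix $\alpha>0$. The plan is to perform a Calder\'on--Zygmund decomposition of $|g|^p$ with respect to $w\,dx$ at a threshold $\lambda=\lambda(\alpha)$ that will be optimized at the end: extract the maximal dyadic cubes $\{Q_j\}$ with $\frac{1}{w(Q_j)}\int_{Q_j}|g|^p w\,dx>\lambda^p$ and split $g=g_1+g_2$, where $g_1$ equals $g$ off $E_0:=\bigcup_j Q_j$ and is controlled by $\lambda$ in a weighted $L^\infty$ sense. For $g_1$, combine Chebyshev's inequality with the strong $L^p$ bound $\|w^{1/p}A_{\mathcal{S}}g_1\|_{L^p}\lesssim [w]_{A_p}^{1/(p-1)}\|g_1\|_{L^p(w)}$ (the $A_2$ theorem of Hyt\"onen, in the range $1<p\le 2$); interpolating with the $L^\infty(w)$ control of $g_1$ trades one power of $\lambda$ for a power of $[w]_{A_p}$. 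For $g_2$, sparseness of the stopping cubes and the choice of $\lambda$ give $w(E_0)\le \lambda^{-p}\|g\|_{L^p(w)}^p$, and a sharp reverse H\"older inequality for $w$ in the Fujii--Wilson / Hyt\"onen--P\'erez form converts this into a bound on $|E_0|$ at the cost of a factor $\log([w]_{A_p}+e)$; outside $E_0$, the sparse contribution of $g_2$ is estimated by a sparse-bilinear duality computation in the spirit of the argument behind Theorem~\ref{thm:A}. Optimizing $\lambda$ balances the two contributions and produces the exponent $1+\tfrac{1}{p^2}$, the logarithm tracing back to the reverse H\"older step.

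The main obstacle is exactly this conversion between $w$-measure and Lebesgue measure: since the weight $w^{1/p}$ sits \emph{outside} the operator $T$, a Calder\'on--Zygmund decomposition of $g$ with respect to $w\,dx$ does not directly control the \emph{unweighted} level set $\{w^{1/p}|Tg|>\alpha\}$. Threading this mismatch through the sharp reverse H\"older inequality is what forces the appearance of the logarithm, and the slight suboptimality acknowledged in the statement likely reflects that removing it would require either a genuinely different route or a more delicate two-weight analysis.
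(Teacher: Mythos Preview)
Your plan does not identify the mechanism that produces the exponent $1+\tfrac{1}{p^2}$, and with the ingredients you list it cannot. A Calder\'on--Zygmund decomposition of $|g|^p$ in the measure $w\,dx$ does \emph{not} give any $L^\infty$ control of the good part $g_1$; it only gives $\|g_1\|_{L^p(w)}\le\|g\|_{L^p(w)}$, so feeding $g_1$ into the strong bound $\|w^{1/p}A_{\mathcal S}g_1\|_{L^p}\lesssim[w]_{A_p}^{1/(p-1)}\|g_1\|_{L^p(w)}$ simply returns the exponent $\tfrac{1}{p-1}$. For the bad part you invoke ``the argument behind Theorem~\ref{thm:A}'', which yields $1+\tfrac{1}{p}$. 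Optimizing a parameter $\lambda$ between two estimates can at best interpolate between them; there is no value of $\lambda$ that turns $\min\bigl(\tfrac{1}{p-1},\,1+\tfrac{1}{p}\bigr)$ into $1+\tfrac{1}{p^2}$, which for $p$ close to $1$ is strictly smaller than both. Also, your account of the logarithm is off: the sharp reverse H\"older inequality does not convert $w(E_0)$ into $|E_0|$ at logarithmic cost (there is no ambient cube to run it against), so this step has no content as written.

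The paper's route is entirely different and hinges on a new input you do not use: the sharp weak bound $[w]_{A_q}^{2/q}$ for the Christ--Goldberg maximal operator in the range $1<q<2$ (Theorem~\ref{swtb}). One dualizes the weak $L^p$ estimate to a restricted strong $L^{p'}(\sigma)$ estimate for $A_{\mathcal S}(w^{1/p}\chi_E)$, with $\sigma=w^{-1/(p-1)}$ and $p'>2$, and then unfolds the $L^{p'}(\sigma)$ norm via the Cascante--Ortega--Verbitsky equivalence. The inner sums are then controlled by a local weak $L^{p'}(\sigma)$ bound for $A_{\mathcal S(Q)}$ (Lemma~\ref{wtsop}), whose proof splits the sparse family into level sets $F_k$: on the first $N$ layers one uses, by duality, Theorem~\ref{swtb} with exponent $p'{}'=p<2$, and on the tail one uses sparseness together with the sharp reverse H\"older property. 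The logarithm is exactly the choice $N\simeq\log([w]_{A_p}+e)$ needed to make the tail summable. None of these three pillars---duality to the restricted strong type, the Cascante--Ortega--Verbitsky formula, and the injection of the new $2/p$ maximal bound---appears in your outline, and without at least the last one there is no visible source for an exponent below $1+\tfrac{1}{p}$.
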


Some comments about these results are in order.
\begin{itemize}
\item The example used to prove the sharpness part of Theorem \ref{swtb} is a modification of the corresponding example constructed in the case $p=1$ by the authors in \cite{LLOR23}.
\item We emphasize that Theorem \ref{czop} is scalar, and it is not clear to us whether it can be extended to the matrix case.
Observe that the currently best known strong type bound for $T_{W,p}$ in the matrix case is
$$\|T_{W,p}\|_{L^p\to L^p}\lesssim [W]_{A_p}^{1+\frac{1}{p(p-1)}}\quad(1<p<\infty)$$
(see \cite{NPTV17} for the case $p=2$ and \cite{CIM18} for all $p>1$). It is a big open question whether this bound can be improved to $\max(\frac{1}{p-1},1)$ as in the scalar case.
The weak type bound for $T_{W,p}$ obtained in Theorem \ref{thm:A} provides a new result when $1+\frac{1}{p}<1+\frac{1}{p(p-1)}$, that is, when $1<p<2$, and it is an important question whether
in this range the exponent $1+\frac{1}{p}$ is sharp. Theorem \ref{czop} shows that this is not true in the {\it scalar} case.
\item Even though we stated Theorem \ref{czop} for all $1<p<2$, it provides a new bound in a smaller range of $p$ satisfying $1+\frac{1}{p^2}<\frac{1}{p-1}$.
\item The same example as in Theorem \ref{swtb} shows also that $\f_{H_{p}}(t)\gtrsim~t^{\frac{2}{p}}$ for the Hilbert transform $H$.
For this reason it is tempting to conjecture that the right-hand side of~(\ref{czrs}) can be improved to $[w]_{A_p}^{\frac{2}{p}}$ for all $1<p<2$.
\item In a very recent paper \cite{NSS24}, the authors obtained a multilinear version of Theorem \ref{thm:A} for Calder\'on--Zygmund operators in the scalar setting. 
In the linear case they recover the exponent $1+\frac{1}{p}$ of $[w]_{A_p}$. Therefore it would be interesting to check whether the approach used in the proof of Theorem \ref{czop} 
can be extended to the multilinear setting. 
\end{itemize}

We complement Theorems \ref{swtb} and \ref{czop} by considering the corresponding weak type bounds in the case $p\ge 2$. First consider  $M_{W,p}$.
Comparing the bounds in Theorem \ref{swtb} and in (\ref{trwt}), we see that $\frac{2}{p}<\frac{1}{p-1}$ precisely when $1<p<2$. Therefore, it is natural to conjecture that (\ref{trwt}) is sharp for $p\ge 2$.
The sharpness of the strong type bound follows easily by the standard power weight example. However, the weak type case is more
complicated. While we are not able to establish this conjecture, we obtain a close result by showing that (\ref{trwt}) is `almost' sharp for $p\ge 2$.

\begin{theorem}\label{as}
Let $p\ge 2$. Then, for all $t\ge 1$,
$$
\f_{M_{p}}(t)\gtrsim t^{\frac{1}{p-1}} \big(\log (t+e) \big)^{-\frac 1p}.
$$
\end{theorem}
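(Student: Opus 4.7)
My plan is to combine a power-weight realisation of the strong-type lower bound with a Kolmogorov-type interpolation between $L^{p,\infty}$ and $L^\infty$.

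Set $\delta \sim t^{-1/(p-1)}$ and consider the power weight $w(x) = |x|^{(p-1)(1-\delta)}$ on a bounded interval, so that $[w]_{A_p} \sim t$. The standard extremal for Buckley's theorem gives a test function $f_0$ (essentially a truncated version of $|x|^{-(1-\delta)/p}\chi_B$) for which
$$\|M_{w,p}f_0\|_{L^p(dx)} \gtrsim t^{1/(p-1)} \|f_0\|_{L^p(dx)},$$
since $\|M_{w,p}f_0\|_{L^p(dx)} = \|Mg_0\|_{L^p(w)}$ with $g_0 = w^{-1/p} f_0$, and Buckley's strong-type lower bound for $M$ on $L^p(w)$ is saturated by this choice.

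The key ingredient is then the interpolation inequality
$$\|h\|_{L^p(dx)}^p \le \|h\|_{L^{p,\infty}(dx)}^p \bracs{1 + p\log^+ \has{\|h\|_\infty |\operatorname{supp} h|^{1/p}/\|h\|_{L^{p,\infty}(dx)}}},$$
applied to $h = M_{w,p} f_0$. If both $\|M_{w,p}f_0\|_\infty$ and $|\operatorname{supp}(M_{w,p}f_0)|$ are at most polynomial in $t$, the logarithmic factor is $O(\log(t+e))$ and rearranging yields
$$\|M_{w,p}f_0\|_{L^{p,\infty}(dx)} \gtrsim \frac{t^{1/(p-1)}}{(\log(t+e))^{1/p}} \|f_0\|_{L^p(dx)},$$
which is the claim for $\varphi_{M_p}(t)$.

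The hard part is arranging this polynomial control alongside the full strong-type gain $t^{1/(p-1)}$. The choice $f_0(x) = |x|^{-(1-\delta)/p} \chi_{(\e,1)}(x)$ with $\e \sim e^{-c/\delta}$---which is what makes the amplification $Mg_0 \sim g_0/\delta$ effective on most of the support---produces $\|M_{w,p}f_0\|_\infty \sim e^{c(1-\delta)/(p\delta)} \sim e^{c t^{1/(p-1)}/p}$, and the ensuing loss in the interpolation formula is of order $t^{1/(p(p-1))}$ rather than $(\log t)^{1/p}$. To circumvent this, one must modify the construction---for instance by distributing the singularity across many scales, in the spirit of the sharpness scheme of \cite{LLOR23} that underlies Theorem~\ref{swtb}---so as to keep $\|M_{w,p}f_0\|_\infty$ and $|\operatorname{supp}(M_{w,p}f_0)|$ polynomially bounded in $t$ while preserving the strong-type gain. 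Carrying out that modification is the technical heart of the proof.
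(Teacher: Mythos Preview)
Your proposal is not a proof: you explicitly defer what you yourself call ``the technical heart'' --- the construction of a weight and test function that simultaneously (i) saturate the strong-type bound $\|M_{w,p}f_0\|_{L^p}\gtrsim t^{1/(p-1)}\|f_0\|_{L^p}$ and (ii) keep $\|M_{w,p}f_0\|_\infty\cdot|\operatorname{supp}(M_{w,p}f_0)|^{1/p}$ polynomial in $t$. You correctly diagnose that the power-weight extremal violates (ii) by an exponential factor, and you gesture toward a multi-scale modification, but you do not carry it out. Without that construction the Kolmogorov interpolation step has nothing to act on, and what remains is a heuristic, not an argument.

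It is, moreover, not clear that your route can be completed. The multi-scale weight the paper actually uses (a $p\ge2$ variant of the one from Theorem~\ref{swtb}, with $|I_k|=k^{p-1}$) still takes values of order $2^{Np}$ with $N\log N\sim t^{1/(p-1)}$, hence super-polynomial in $t$; so even that weight does not obviously deliver the $L^\infty$ control your scheme requires. The paper avoids the issue entirely by working on the dual side: one passes from $\|M_{w,p}\|_{L^p\to L^{p,\infty}}$ to the restricted strong-type inequality $\|\mathcal H^*(w^{1/p}\chi_E)\|_{L^{p'}(\sigma)}\lesssim\mathcal N\,|E|^{1/p'}$ for the adjoint Hardy operator (with $\sigma=w^{-1/(p-1)}$), builds the explicit weight with $[w]_{A_p}\sim (N\log N)^{p-1}$ and the set $E=\bigcup_k I_k$, and computes both sides directly. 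No $L^\infty$ control enters anywhere; the logarithm in the statement emerges from $\sigma(J_k)\gtrsim k^{-1}$ and the resulting harmonic sum $\sum_{k\le N/2} k^{-1}\sim\log N$. Your intuition that a construction ``in the spirit of \cite{LLOR23}'' is needed is correct, but the way to extract the lower bound from it is duality, not interpolation against an $L^\infty$ bound that the construction does not provide.
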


In particular, this result shows that the exponent $\frac{1}{p-1}$ in (\ref{trwt}) cannot be decreased when $p\ge 2$.

Consider now $T_{w,p}$. By the $A_2$ theorem \cite{H12},
$$\|T_{w,p}\|_{L^{p,\infty}}\lesssim [w]_{A_p}\|f\|_{L_p}\quad(p\ge 2).$$
We will show that this bound is best possible.

\begin{theorem}\label{czsh} Let $p\ge 2$, and let $H$ be the Hilbert transform. Then, for all $t\ge 1$,
$$\f_{H_{p}}(t)\gtrsim t.$$
\end{theorem}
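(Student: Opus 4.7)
The plan is to saturate the bound via a carefully tuned power-weight example, modelled on (but more delicate than) the Buckley extremizer for the $A_p$-theorem. Given $t\ge 2$, set $\e=1/t$ and take $w(x)=|x|^{\e-1}$; the usual power-weight computation gives $[w]_{A_p}\simeq 1/\e\simeq t$, the dominant contribution to the $A_p$-constant coming from intervals centred at the origin. With the parametrisation $f=gw^{1/p}$ (so that $\|f\|_{L^p}=\|g\|_{L^p(w)}$), the task reduces to exhibiting $g$ satisfying
\[
\|w^{1/p}Hg\|_{L^{p,\infty}(\R)}\;\gtrsim\;\e^{-1}\,\|g\|_{L^p(w)}.
\]
I would choose
\[
g(x)=x^{\b}\,\chi_{(0,1/2)}(x),\qquad \b=-\e^{1+1/p}.
\]
Because $p|\b|\ll\e$, a direct computation gives $\|g\|_{L^p(w)}^{p}=\int_0^{1/2}x^{p\b+\e-1}\,dx\simeq 1/\e$, hence $\|g\|_{L^p(w)}\simeq \e^{-1/p}$.

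The heart of the argument is an estimate on $Hg$. Splitting $g=y^{\b}\chi_{(0,\infty)}-y^{\b}\chi_{[1/2,\infty)}$ and invoking the classical identity $H(y^{\b}\chi_{(0,\infty)})(x)=\cot(\pi(\b+1))\,x^{\b}$ for $x>0$, $-1<\b<0$, together with the Laurent expansion $\cot(\pi(\b+1))=\cot(\pi\b)\sim 1/(\pi\b)$ as $\b\to 0^{-}$, and the explicit evaluation $H(y^{\b}\chi_{[1/2,\infty)})(x)\simeq (1/2)^{\b}/(\pi\b)$ for $x\in(0,1/4)$, one obtains
\[
Hg(x)\;\approx\;\frac{x^{\b}-(1/2)^{\b}}{\pi\b}\qquad\bigl(x\in (0,1/2)\bigr).
\]
Two regimes appear: in the bulk range $x\in (e^{-c/|\b|},1/2)$ a Taylor expansion of the numerator yields $|Hg(x)|\simeq |\log(2x)|$, while in the tiny complementary range $x\in (0,e^{-c/|\b|})$ the $x^{\b}=x^{-|\b|}$ term dominates and $|Hg(x)|\simeq x^{-|\b|}/|\b|$.

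Setting $F:=w^{1/p}|Hg|$, the logarithmic regime gives $F(x)\simeq x^{-(1-\e)/p}|\log(2x)|\approx x^{-1/p}|\log(2x)|$. A distribution-function computation (substitute $y=|\log x|$ and solve $e^{y/p}\,y=\lambda$ iteratively) yields $|\{F>\lambda\}|\simeq (p\log\lambda)^{p}/\lambda^{p}$ on the relevant range of $\lambda$, so that $\lambda^{p}|\{F>\lambda\}|\simeq (p\log\lambda)^{p}$; this attains the value $(1/|\b|)^{p}$ at the critical level $\lambda_{*}\simeq e^{1/(p|\b|)}$. Consequently $\|F\|_{L^{p,\infty}}\gtrsim 1/|\b|$, and
\[
\frac{\|T_{w,p}f\|_{L^{p,\infty}}}{\|f\|_{L^p}}\;\gtrsim\;\frac{1/|\b|}{\e^{-1/p}}\;=\;\frac{\e^{1/p}}{\e^{1+1/p}}\;=\;\e^{-1}\;\simeq\; t,
\]
giving $\f_{H_{p}}(t)\gtrsim t$ for large $t$; the case $t\simeq 1$ is trivial.

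The hardest step will be the distribution-function calculation. The power regime, despite its strong pointwise amplification $x^{-|\b|}/|\b|$, contributes only at scale $\|\cdot\|_{L^{p,\infty}}^{p}\simeq e^{-\e/|\b|}/|\b|^{p}$, which is exponentially smaller than $1/|\b|^{p}$; the weak-norm estimate must therefore be extracted from the log regime, and this requires correctly identifying the critical level $\lambda_{*}\simeq e^{1/(p|\b|)}$ at which $(p\log\lambda_{*})^{p}$ reaches $(1/|\b|)^{p}$. The tuning $|\b|=\e^{1+1/p}$ is precisely what aligns this peak value with the target $\e^{-1}$ after dividing by $\|g\|_{L^p(w)}\simeq \e^{-1/p}$. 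A subsidiary delicate point is to check that the near-cancellation $\cot(\pi\b)\,x^{\b}-(1/2)^{\b}/(\pi\b)$ produces an honest logarithmic growth on the bulk range rather than a lower-order quantity, which reduces to the pointwise bound $x^{-|\b|}>(1/2)^{-|\b|}$ on $(0,1/2)$.
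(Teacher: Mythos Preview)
Your approach has a genuine gap in the distribution-function step. The approximation
\[
F(x)=x^{-(1-\e)/p}|\log(2x)|\approx x^{-1/p}|\log(2x)|
\]
is only valid when $\e|\log x|$ stays bounded, i.e.\ for $x\gtrsim e^{-C/\e}$. But your critical level $\lambda_{*}\simeq e^{1/(p|\b|)}$ corresponds to $x\sim e^{-c/|\b|}=e^{-c\,\e^{-1-1/p}}$, which lies far outside that range. If you keep the $\e$ honestly, writing $x=e^{-y}$ gives
\[
\lambda^{p}\,|\{F>\lambda\}|\;\simeq\;e^{-\e y}\,y^{p},
\]
which is maximised at $y\simeq p/\e$ (well inside the bulk range, since $p/\e\ll 1/|\b|$) with value $\simeq(p/\e)^{p}$. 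Hence $\|F\|_{L^{p,\infty}}\simeq 1/\e$, not $1/|\b|$, and the resulting ratio is only $\e^{-1}/\e^{-1/p}=\e^{-1/p'}$, i.e.\ $\f_{H_{p}}(t)\gtrsim t^{1/p'}$ rather than $t$. The tuning $|\b|=\e^{1+1/p}$ does not rescue this: the damping factor $e^{-\e y}$ coming from $w^{1/p}$ caps the weak norm at $1/\e$ no matter how small $|\b|$ is.

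In fact the pure power weight $|x|^{\e-1}$ seems unable to saturate the bound here. The paper's argument is both simpler and structurally different: it passes by duality to the restricted strong-type inequality $\|w^{-1/p}H(w^{1/p}\chi_E)\|_{L^{p'}}\lesssim \mathcal N\,|E|^{1/p'}$, takes $E=(0,1)$, and uses the \emph{modified} weight
\[
w_\e(x)=\e^{-1}\chi_{[0,1]}(x)+x^{-(1-\e)}\chi_{(1,\infty)}(x)
\]
(symmetrised). The flat plateau of height $\e^{-1}$ on $[0,1]$ is what makes the example work: it gives $\int_0^1 w_\e^{1/p}=\e^{-1/p}$ while the dual integral $\int_2^\infty w_\e^{-1/(p-1)}x^{-p'}\,dx\simeq 1/\e$, so the product is $\e^{-1}$ and one checks $[w_\e]_{A_p}\simeq\e^{-1}$. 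No delicate distribution-function analysis is needed.
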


Observe that this result is much simpler than Theorem \ref{as}, namely, the example showing the sharpness here is much more elementary compared to the example used in the proof of Theorem \ref{as}.

The paper is organized as follows. Section 2 contains some preliminary facts. In Section~3 we prove Theorems \ref{swtb} and \ref{as}, and in Section 4 we prove Theorems \ref{czop} and~\ref{czsh}.

Throughout the paper we use the notation $A\lesssim B$ if $A\le CB$ with some independent constant $C$. We write $A\simeq B$ if $A\lesssim B$ and $B\lesssim A$.

\section{Preliminaries}
\subsection{Dyadic lattices, sparse families and Calder\'on--Zygmund operators}
Given a cube $Q_0\subset {\mathbb R}^d$, let ${\mathcal D}(Q_0)$ denote the set of all dyadic cubes with respect to $Q_0$, that is, the cubes
obtained by repeated subdivision of $Q_0$ and each of its descendants into $2^d$ congruent subcubes.

A dyadic lattice ${\mathscr D}$ in ${\mathbb R}^d$ is any collection of cubes such that
\begin{enumerate}
\renewcommand{\labelenumi}{(\roman{enumi})}
\item
if $Q\in{\mathscr D}$, then each child of $Q$ is in ${\mathscr D}$ as well;
\item
every 2 cubes $Q',Q''\in {\mathscr D}$ have a common ancestor, i.e., there exists $Q\in{\mathscr D}$ such that $Q',Q''\in {\mathcal D}(Q)$;
\item
for every compact set $K\subset {\mathbb R}^d$, there exists a cube $Q\in {\mathscr D}$ containing $K$.
\end{enumerate}

\begin{lemma}\label{3n}
There exist $3^d$ dyadic lattices ${\mathscr D}_j$ such that for every cube $Q\subset{\mathbb R}^d$ there is a cube $R$ from some ${\mathscr D}_j$ which contains $Q$ and $|R|\le 3^d|Q|$.
\end{lemma}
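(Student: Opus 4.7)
My approach is to construct the $3^d$ lattices as translated copies of the standard dyadic grid and then to apply a coordinatewise pigeonhole. For each $\omega=(\omega_1,\dots,\omega_d)\in\{0,1,2\}^d$ I would define
\[
\mathscr{D}_\omega := \bigcup_{k\in\mathbb{Z}}\Bigl\{\, 2^k\bigl(m+[0,1)^d\bigr) + (-1)^k\tfrac{2^k}{3}\omega \,:\, m\in\mathbb{Z}^d \,\Bigr\}.
\]
The alternating sign $(-1)^k$ is the decisive feature: it forces the shift at scale $k+1$ to differ from the one at scale $k$ by $-(-1)^k 2^k\omega\in 2^k\mathbb{Z}^d$, which is exactly what is needed so that each scale-$k$ cube of $\mathscr{D}_\omega$ is a dyadic child of a unique scale-$(k+1)$ cube of the same family. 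Axiom (i) then reduces to the parity fact that every integer $m$ can be written uniquely as $2m'-(-1)^k\omega_i+\varepsilon$ with $\varepsilon\in\{0,1\}$ and $m'\in\mathbb{Z}$, while (ii) and (iii) follow by iterating the nesting upward.

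For the covering step, given a cube $Q$ with side length $\ell(Q)$, I would choose the unique integer $k$ satisfying $\tfrac{3}{2}\ell(Q)<2^k\le 3\ell(Q)$, so that every scale-$k$ cube $R$ satisfies $|R|=2^{kd}\le 3^d|Q|$. A scale-$k$ cube of $\mathscr{D}_\omega$ contains $Q$ if and only if, in each coordinate $i$, the projection of $Q$ does not cross the lattice hyperplane at $(-1)^k(2^k/3)\omega_i\pmod{2^k}$. As $\omega_i$ ranges over $\{0,1,2\}$ these three hyperplanes occupy the evenly spaced positions $\{0,\,2^k/3,\,2\cdot 2^k/3\}$ on the circle $\mathbb{R}/2^k\mathbb{Z}$, and since $\ell(Q)<\tfrac{2}{3}\cdot 2^k$, no arc of length $\ell(Q)$ can contain all three hyperplanes. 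Hence for each coordinate some $\omega_i$ is admissible, and choosing $\omega$ coordinatewise produces a cube $R\in\mathscr{D}_\omega$ with $Q\subset R$ and $|R|\le 3^d|Q|$.

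\textbf{Expected obstacle.} The only genuinely delicate point is the verification of axiom (i): one has to keep careful track of the alternating sign $(-1)^k$ and confirm that the specific rational shifts at neighboring scales differ by an integer multiple of $2^k$ in every coordinate. Once this bookkeeping is in place the covering step is a clean coordinatewise pigeonhole, and the constant $3^d$ emerges directly from the three admissible shifts per coordinate.
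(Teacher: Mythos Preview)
Your argument is correct. The construction you give is the standard ``one-third trick'' (shifted dyadic grids with the alternating sign ensuring nestedness), and every step checks out: the parity computation you flag for axiom~(i) is exactly the point---the shift at scale $k+1$ minus twice the shift at scale $k$ equals $-(-1)^k 2^k\omega \in 2^k\mathbb{Z}^d$, so the grids are nested; axioms~(ii) and~(iii) then follow because the chain of ancestors of any cube exhausts $\mathbb{R}^d$. The covering step is also fine: with $\tfrac{3}{2}\ell(Q)<2^k\le 3\ell(Q)$ you get $\ell(Q)<\tfrac{2}{3}\cdot 2^k$, and on $\mathbb{R}/2^k\mathbb{Z}$ any arc of that length omits at least one of the three equally spaced gridpoints, so a coordinatewise choice of $\omega$ gives the containing cube.

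As for comparison with the paper: there is nothing to compare. The paper does not prove Lemma~\ref{3n}; it simply records the statement and cites \cite{LN19} for both the definition of a dyadic lattice and this result. Your proposal therefore supplies a self-contained proof where the paper gives only a reference. (The construction in \cite{LN19} is of the same flavor---shifted grids indexed by $\{0,1,2\}^d$---so your approach is in line with the cited source.)
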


This lemma and the above definition of a dyadic lattice can be found in \cite{LN19}.

Let ${\mathscr D}$ be a dyadic lattice. We say that a family ${\mathcal S}\subset {\mathscr D}$ is $\eta$-sparse, $0<\eta<1$, if for every cube $Q\in {\mathcal S}$,
$$\Big|\bigcup_{Q'\in {\mathcal S}: Q'\subsetneq Q}Q'\Big|\le (1-\eta)|Q|.$$
In particular, if ${\mathcal S}\subset {\mathscr D}$ is $\eta$-sparse, then defining for every $Q\in {\mathcal S}$,
$$E_Q=Q\setminus \bigcup_{Q'\in {\mathcal S}: Q'\subsetneq Q}Q',$$
we obtain that $|E_Q|\ge \eta |Q|$ and the sets $\{E_Q\}_{Q\in {\mathcal S}}$ are pairwise disjoint.
If the sparseness number is nonessential we will skip it by simply saying that a family ${\mathcal S}$ is sparse.

The following result is an immediate combination of \cite[Lemmas 6.3, 6.6]{LN19}.

\begin{lemma}\label{unsparse}
If ${\mathcal S}\subset {\mathscr D}$ is $\eta$-sparse and $m\ge 2$, then one can represent ${\mathcal S}$ as a disjoint union ${\mathcal S}=\cup_{j=1}^m{\mathcal S}_j$, where
each family ${\mathcal S}_j$ is $\frac{m}{m+(1/\eta)-1}$-sparse.
\end{lemma}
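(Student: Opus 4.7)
The plan is to partition ${\mathcal S}$ according to the residue modulo $m$ of a ``generation'' function on the family, and then to iterate the $\eta$-sparseness condition $m$ times.

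For each $Q\in{\mathcal S}$ I would set $g(Q):=0$ when $Q$ is maximal in ${\mathcal S}$, and $g(Q):=g(\hat Q)+1$ otherwise, where $\hat Q$ is the smallest cube of ${\mathcal S}$ strictly containing $Q$; write ${\mathcal G}_k:=\{Q\in{\mathcal S}:g(Q)=k\}$. For any $R\in{\mathcal G}_k$, the elements of ${\mathcal G}_{k+1}$ contained in $R$ are precisely the maximal members of ${\mathcal S}$ strictly below $R$, hence pairwise disjoint, so the $\eta$-sparseness of ${\mathcal S}$ gives
\[
\Big|\bigcup_{Q'\in{\mathcal G}_{k+1},\,Q'\subseteq R}Q'\Big|\le(1-\eta)\,|R|.
\]

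Next I would define the disjoint decomposition ${\mathcal S}_j:=\bigcup_{k\equiv j-1\,(\mathrm{mod}\,m)}{\mathcal G}_k$ for $j=1,\dots,m$. Fixing $Q\in{\mathcal S}_j$ with $g(Q)=k$, every $Q'\in{\mathcal S}_j$ with $Q'\subsetneq Q$ satisfies $g(Q')\ge k+m$, and by following its ${\mathcal S}$-ancestor chain up to generation $k+m$, $Q'$ is contained in some cube of ${\mathcal G}_{k+m}$ lying in $Q$. Iterating the displayed inequality $m$ times then gives
\[
\Big|\bigcup_{Q'\in{\mathcal S}_j,\,Q'\subsetneq Q}Q'\Big|\le\Big|\bigcup_{R\in{\mathcal G}_{k+m},\,R\subseteq Q}R\Big|\le(1-\eta)^m\,|Q|,
\]
so each ${\mathcal S}_j$ is $\bigl(1-(1-\eta)^m\bigr)$-sparse.

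The proof then reduces to the elementary inequality
\[
1-(1-\eta)^m\ \ge\ \frac{m}{m+(1/\eta)-1}\ =\ \frac{m\eta}{1+(m-1)\eta},\qquad 0<\eta\le 1,
\]
which, after clearing denominators, is equivalent to $f(\eta):=(1-\eta)^{m-1}\bigl(1+(m-1)\eta\bigr)\le 1$. A short computation gives $f(0)=1$ and $f'(\eta)=-m(m-1)\eta(1-\eta)^{m-2}\le 0$ on $[0,1]$, so $f$ is nonincreasing and the inequality holds. Since any $\eta_1$-sparse family is automatically $\eta_2$-sparse whenever $\eta_2\le\eta_1$, the lemma follows.

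I do not expect any genuine obstacle. The only point requiring care is the observation that a cube of ${\mathcal S}_j$ strictly contained in $Q$ (with $g(Q)=k$) must sit inside some member of ${\mathcal G}_{k+m}$ contained in $Q$, and this relies on the fact that $g$ increases by exactly one along the ${\mathcal S}$-parent chain. The remaining ingredients are a geometric-series-type iteration of the sparseness hypothesis together with the short monotonicity check above.
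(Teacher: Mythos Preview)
Your argument is correct, and in fact it yields the slightly sharper sparseness constant $1-(1-\eta)^m$, which you then show dominates $\frac{m}{m+(1/\eta)-1}$. The paper does not prove this lemma at all; it simply quotes it as an immediate combination of \cite[Lemmas~6.3,~6.6]{LN19}, where the argument passes through the equivalence between $\eta$-sparseness and $(1/\eta)$-Carleson families and a splitting lemma for Carleson families. Your generation-mod-$m$ decomposition is the natural direct route and avoids that detour, at the cost of being specific to the dyadic (nested) setting, which is all that is needed here.

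One small technical caveat: your generation function $g$ is only well defined when every $Q\in\mathcal S$ lies below some maximal element of $\mathcal S$ (so that the chain of $\mathcal S$-ancestors terminates). This need not hold for an arbitrary sparse family in a dyadic lattice. The standard fix is to prove the statement for finite $\mathcal S$ (or for $\mathcal S$ restricted to cubes contained in a fixed $Q_0\in\mathscr D$) and then pass to the general case by a routine limiting argument; since the sparseness constant you obtain is uniform, this causes no difficulty.
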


The following statement is implicit in \cite{DLR16}. We will give its proof for the sake of completeness.

\begin{lemma}\label{sppr} Let a family ${\mathcal S}\subset {\mathscr D}$ be $\frac{7}{8}$-sparse. For a non-negative locally integrable $\f$ and $\ga>0$, set
$$F:=\{Q\in {\mathcal S}: \ga\le\frac{1}{|Q|}\int_Q\f\le 4\ga\}.$$
Then there exist pairwise disjoint subsets $G_Q\subset Q, Q\in F,$ such that for all $Q\in F$,
$$\int_Q\f\le 8\int_{G_Q}\f.$$
\end{lemma}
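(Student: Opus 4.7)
The plan is to run a standard principal-cube (stopping-time) construction inside the family $F$. For each $Q \in F$, I would let $P_Q$ denote the collection of maximal elements $Q' \in F$ with $Q' \subsetneq Q$. By the usual dyadic dichotomy, maximality forces the cubes in $P_Q$ to be pairwise disjoint subsets of $Q$, so one can define
\[
G_Q := Q \setminus \bigcup_{Q' \in P_Q} Q'.
\]

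Before estimating, I would verify that the $G_Q$ are pairwise disjoint across $Q \in F$. If $Q_1, Q_2 \in F$ are distinct and disjoint, there is nothing to prove; if they are nested, say $Q_1 \subsetneq Q_2$, then there is some maximal $R \in F$ with $Q_1 \subseteq R \subsetneq Q_2$, so $R \in P_{Q_2}$, which forces $G_{Q_2}$ to avoid $R$ and hence to avoid $Q_1 \supseteq G_{Q_1}$.

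The main identity is
\[
\int_Q \varphi = \int_{G_Q} \varphi + \sum_{Q' \in P_Q} \int_{Q'} \varphi,
\]
which follows from the disjoint decomposition of $Q$. To bound the sum I would use that each $Q' \in P_Q$ lies in $F$, giving $\int_{Q'} \varphi \le 4\gamma |Q'|$, together with the $\frac{7}{8}$-sparseness of $\mathcal{S}$ applied to the pairwise disjoint subfamily $P_Q \subset \mathcal{S}$ consisting of cubes strictly inside $Q$:
\[
\sum_{Q' \in P_Q} |Q'| \;\le\; \Big|\bigcup_{Q' \in \mathcal{S}:\,Q' \subsetneq Q} Q'\Big| \;\le\; \tfrac{1}{8}|Q|.
\]
Combining these two estimates with the lower bound $\gamma |Q| \le \int_Q \varphi$ coming from $Q \in F$ yields
\[
\sum_{Q' \in P_Q} \int_{Q'} \varphi \;\le\; 4\gamma \cdot \tfrac{|Q|}{8} \;=\; \tfrac{\gamma |Q|}{2} \;\le\; \tfrac{1}{2}\int_Q \varphi,
\]
hence $\int_{G_Q} \varphi \ge \tfrac{1}{2}\int_Q \varphi$, which is in fact stronger than the factor $8$ claimed in the lemma.

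There is no genuine obstacle here; the numerics are essentially dictated by the definition of $F$. The role of the specific sparseness parameter $\frac{7}{8}$ is precisely to ensure that $4\gamma\cdot\tfrac{|Q|}{8}$ sits strictly below $\gamma |Q|$, so that a definite proportion of $\int_Q \varphi$ must live on $G_Q$; any sparseness parameter strictly greater than $\frac{3}{4}$ would suffice to give the factor $8$ in the conclusion.
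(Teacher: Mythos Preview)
Your argument is correct and is essentially identical to the paper's own proof: the same sets $G_Q$, the same use of the maximal subcubes in $F$, and the same application of $\tfrac{7}{8}$-sparseness to bound $\sum |Q'| \le \tfrac{1}{8}|Q|$. Your final rearrangement is in fact a bit tighter than the paper's (you obtain $\int_Q \varphi \le 2\int_{G_Q}\varphi$ directly, whereas the paper routes through $\gamma|Q| \le 2\int_{G_Q}\varphi$ and then $\int_Q\varphi \le 4\gamma|Q|$ to arrive at the factor $8$); the only inaccuracy is the closing remark that any $\eta > \tfrac{3}{4}$ yields the specific constant $8$ --- that threshold gives \emph{some} finite constant, but $8$ itself comes from $\eta = \tfrac{7}{8}$.
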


\begin{proof}
Define
$$G_Q:=Q\setminus \bigcup_{Q'\in F: Q'\subsetneq Q}Q'.$$
Then the sets $\{G_Q\}_{Q\in F}$ are pairwise disjoint. Next, let $P_j$ be the maximal cubes of the family $\{Q'\in F: Q'\subsetneq Q\}$.
Then they are pairwise disjoint, and
$$\sum_j|P_j|=|Q\setminus G_Q|\le \frac{1}{8}|Q|.$$
Hence,
\begin{eqnarray*}
\ga|Q|&\le&\int_Q\f=\int_{G_Q}\f+\sum_{j}\int_{P_j}\f\\
&\le& \int_{G_Q}\f+4\ga\sum_{j}|P_j|\le \int_{G_Q}\f+\frac{\ga}{2}|Q|.
\end{eqnarray*}
From this, $\ga|Q|\le 2\int_{G_Q}\f$,
and the statement follows from the definition of $F$.
\end{proof}

Given a sparse family ${\mathcal S}$, define for non-negative locally integrable $\f$ the sparse operator $A_{\mathcal S}$ by
$$A_{\mathcal S}(\f)(x):=\sum_{Q\in {\mathcal S}}\Big(\frac{1}{|Q|}\int_Q\f\Big)\chi_Q(x).$$

We say that $T$ is a Dini-continuous Calder\'on--Zygmund operator if $T$ is a linear operator of weak type $(1,1)$ such that
$$Tf(x)=\int_{{\mathbb R}^n}K(x,y)f(y)dy\quad\text{for all}\,\,x\not\in \text{supp}\,f$$
with kernel $K$ satisfying the smoothness condition
$$
|K(x,y)-K(x',y)|\le \o\left(\frac{|x-x'|}{|x-y|}\right)\frac{1}{|x-y|^n}
$$
for $|x-x'|<|x-y|/2$, where $\o$ is a modulus of continuity such that $\int_0^1\o(t)\frac{dt}{t}<\infty.$

The following result is well known, see, e.g.,  \cite{LO20} for a short proof.

\begin{theorem}\label{spd} Let $T$ be a Dini-continuous Calder\'on--Zygmund operator.
There exist $3^d$ dyadic lattices ${\mathscr D}_j$ with the following property:
for every compactly supported and integrable $f$, there exist $\eta_d$-sparse families ${\mathcal S}_j\subset {\mathscr D}_j$ such that
\begin{equation}\label{sd}
|Tf(x)|\lesssim \sum_{j=1}^{3^d}A_{{\mathcal S}_j}(|f|)(x)
\end{equation}
almost everywhere.
\end{theorem}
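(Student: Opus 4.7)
The plan is the Lerner-type grand-maximal-truncation approach (as in \cite{LO20}). For each of the $3^d$ shifted dyadic lattices $\mathscr{D}_j$ furnished by Lemma \ref{3n}, I build a $\tfrac12$-sparse family $\mathcal{S}_j\subset\mathscr{D}_j$ satisfying (\ref{sd}); the sparseness constant can then be adjusted to any chosen $\eta_d\in(0,1)$ via Lemma \ref{unsparse}.

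Introduce the grand maximal truncation
$$\mathcal{M}_T f(x):=\sup_{Q\ni x}\esssup_{\xi\in Q}\abs{T(f\chi_{\mathbb{R}^d\setminus 3Q})(\xi)},$$
and write $\langle g\rangle_Q:=\frac{1}{|Q|}\int_Q g$. The central technical input is that $\mathcal{M}_T$ maps $L^1\to L^{1,\infty}$ with norm bounded by $\|T\|_{L^1\to L^{1,\infty}}+\|\omega\|_{\mathrm{Dini}}$, where $\|\omega\|_{\mathrm{Dini}}:=\int_0^1\omega(t)\,dt/t$. Establishing this is, in my view, the main obstacle. The idea is: for $\xi,\xi'\in Q$ and $y\notin 3Q$, the smoothness of the kernel combined with a dyadic annular decomposition $\mathbb{R}^d\setminus 3Q=\bigcup_{k\ge 1}(3^{k+1}Q\setminus 3^kQ)$ delivers
$$\abs{T(f\chi_{\mathbb{R}^d\setminus 3Q})(\xi)-T(f\chi_{\mathbb{R}^d\setminus 3Q})(\xi')}\lesssim \|\omega\|_{\mathrm{Dini}}\,Mf(x_0)\qquad(x_0\in Q),$$
where $M$ is the Hardy--Littlewood maximal operator. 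This reduces the inner $\esssup$ to a single value; then $|T(f\chi_{\mathbb{R}^d\setminus 3Q})(x_0)|\le|Tf(x_0)|+|T(f\chi_{3Q})(x_0)|$ combined with the weak $(1,1)$ bounds for $T$ and for $M$ closes out the weak-type estimate for $\mathcal{M}_T$.

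With $\mathcal{M}_T$ under control, I fix $f\in L^1$ of compact support and, in each lattice $\mathscr{D}_j$, pick an ambient cube $Q_0\in\mathscr{D}_j$ with $\operatorname{supp} f\subset Q_0$. Then I run a recursive stopping in $\mathcal{D}(Q_0)$: put $Q_0\in\mathcal{S}_j$, and for every $Q\in\mathcal{S}_j$ declare its $\mathcal{S}_j$-children to be the maximal $P\in\mathcal{D}(Q)$ with $P\subsetneq Q$ for which
$$\langle|f|\rangle_P>A\langle|f|\rangle_Q\qquad\text{or}\qquad |\{x\in P:\mathcal{M}_T f(x)>A\langle|f|\rangle_Q\}|>\tfrac12|P|.$$
Choosing $A=A(d,\|\mathcal{M}_T\|_{L^1\to L^{1,\infty}})$ large enough, the $L^1$-stopping bound $\sum|P_j|\le A^{-1}|Q|$ together with weak $(1,1)$ of $\mathcal{M}_T$ and Chebyshev force $\sum|P_j|\le\tfrac12|Q|$, so $\mathcal{S}_j$ is $\tfrac12$-sparse.

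To extract the pointwise bound, at a.e.\ Lebesgue point $x\in Q_0$ let $Q(x)\in\mathcal{S}_j$ be the smallest $\mathcal{S}_j$-cube containing $x$. The stopping rule yields $\mathcal{M}_T f(x)\le A\langle|f|\rangle_{Q(x)}$, hence $|T(f\chi_{\mathbb{R}^d\setminus 3Q(x)})(x)|\le A\langle|f|\rangle_{Q(x)}$; the local remainder $|T(f\chi_{3Q(x)})(x)|$ is then telescoped through the chain of $\mathcal{S}_j$-ancestors of $x$ by the same grand-maximal criterion. Iterating produces $|Tf(x)|\lesssim A_{\mathcal{S}_j}(|f|)(x)$ a.e.\ on $Q_0$, and summing over $j=1,\dots,3^d$ yields (\ref{sd}) with a $\tfrac12$-sparse family per lattice.
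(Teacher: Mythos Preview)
The paper itself gives no proof of Theorem~\ref{spd}; it simply cites \cite{LO20}. Your outline follows exactly that grand-maximal-truncation scheme, so in spirit you are reproducing the argument the paper defers to.

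The execution, however, has a genuine gap in the sparseness step. At every generation you stop on the \emph{global} $\mathcal{M}_Tf$: children $P\subsetneq Q$ are selected when $|\{y\in P:\mathcal{M}_Tf(y)>A\langle|f|\rangle_Q\}|>\tfrac12|P|$, and you invoke the weak $(1,1)$ bound for $\mathcal{M}_T$ to control $\sum|P|$. But weak $(1,1)$ only gives $|\{\mathcal{M}_Tf>A\langle|f|\rangle_Q\}|\lesssim \|f\|_{L^1}/(A\langle|f|\rangle_Q)$, and once $Q$ is a stopping cube reached via the second criterion it may carry arbitrarily little mass of $f$ --- even $\langle|f|\rangle_Q=0$ --- in which case the superlevel set covers all of $Q$ and every dyadic child gets selected. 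The remedy in \cite{LO20} is to \emph{localize}: at level $Q$ one stops on $\mathcal{M}_T(f\chi_{3Q})$ with threshold $C\langle|f|\rangle_{3Q}$, so that weak $(1,1)$ yields a bound $\lesssim C^{-1}|3Q|$ uniformly in $Q$. This forces the dilated averages $\langle|f|\rangle_{3Q}$ into the pointwise estimate, and it is precisely here that the $3^d$ shifted lattices of Lemma~\ref{3n} enter, absorbing each $3Q$ into a genuine dyadic cube of comparable size --- they are not used to run $3^d$ independent stopping constructions as you describe. A related symptom: your ``telescope from the leaf $Q(x)$ upward'' never actually controls the innermost local piece $|T(f\chi_{3Q(x)})(x)|$; the argument runs top-down, establishing at each step that $|T(f\chi_{3Q})|\chi_Q\le C\langle|f|\rangle_{3Q}\chi_Q+\sum_j|T(f\chi_{3P_j})|\chi_{P_j}$.
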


\subsection{Matrix weights}
Recall first that a scalar weight $w$ satisfies the scalar $A_p,p>1,$ condition if
$$[w]_{A_p}:=\sup_{Q}\Big(\frac{1}{|Q|}\int_Qw\Big)\Big(\frac{1}{|Q|}\int_Qw^{-\frac{1}{p-1}}\Big)^{p-1}<\infty.$$

We will use the following sharp reverse H\"older property from \cite{HP13}.
\begin{prop}\label{rhweights}
There exists a constant $c_{d}>0$ such that for all $w \in A_p$ and every cube $Q\subset {\mathbb R}^d$,
\begin{equation}\label{srh}
\Big(\frac{1}{|Q|}\int_Qw^r\Big)^{1/r}\le 2\frac{1}{|Q|}\int_Qw,
\end{equation}
where $r:=1+\frac{1}{c_d[w]_{A_{p}}}$.
\end{prop}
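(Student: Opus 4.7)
I will follow the approach of Hyt\"onen--P\'erez, proceeding through the Fujii--Wilson $A_\infty$ constant
\begin{equation*}
\tau := [w]_{A_\infty}^{\mathrm{FW}} := \sup_{Q}\frac{1}{w(Q)}\int_Q M^d(w\chi_Q)\,dx,
\end{equation*}
where $M^d$ is the dyadic maximal operator. The plan is twofold: first, establish the quantitative inclusion $\tau \le c_d[w]_{A_p}$, and then prove the sharp reverse H\"older inequality with constant $2$ and exponent $r = 1 + \tfrac{1}{c_d\tau}$. The first part is a standard consequence of the Calder\'on--Zygmund decomposition of the level sets $\{M^d(w\chi_Q)>\lambda\}$, the $A_p$ inequality $w(Q')\sigma(Q')^{p-1}\le[w]_{A_p}|Q'|^p$ (for $\sigma := w^{1-p'}$), and a layer-cake integration.

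For the main step, fix $Q_0$ and set $\varepsilon := r-1$. For each $k \ge 0$ consider the superlevel sets $\Omega_k := \{x\in Q_0 : M^d_{Q_0}(w\chi_{Q_0})(x)>2^k w_{Q_0}\}$. Calder\'on--Zygmund decomposition gives $\Omega_k = \bigsqcup_j Q_{k,j}$ with $2^k w_{Q_0}<w_{Q_{k,j}}\le 2^{d+k} w_{Q_0}$, and on $\Omega_k \setminus \Omega_{k+1}$ the pointwise bound $w(x) \le M^d w(x) \le 2^{k+1}w_{Q_0}$ holds almost everywhere. Summing these pointwise estimates over $k$ reduces the problem to controlling a weighted series of the form $\sum_{k\ge 0} 2^{k(1+\varepsilon)}|\Omega_k|$. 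The key ingredient is to apply the Fujii--Wilson bound inside each Calder\'on--Zygmund cube $Q_{k,j}$: since $M^d_{Q_{k,j}}(w\chi_{Q_{k,j}})$ dominates $M^d_{Q_0}(w\chi_{Q_0})$ on $Q_{k,j}$ at levels exceeding $w_{Q_{k,j}}$, after a layer-cake in the level one obtains the recursive tail estimate
\begin{equation*}
\sum_{k'\ge k+d+1}2^{k'}|\Omega_{k'}| \le 2^{d+1}\tau\cdot 2^k|\Omega_k|,
\end{equation*}
which by iteration forces geometric decay of the dyadic tails at a rate governed by $\tau$. Choosing $\varepsilon$ of order $1/(c_d\tau)$ turns the weighted series into a convergent geometric sum that produces the claimed reverse H\"older bound after raising to the $1/(1+\varepsilon)$-th power and using $2^{1/(1+\varepsilon)} \le 2$.

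\textbf{Main obstacle.} The delicate balance is between the geometric weight $2^{k(1+\varepsilon)}$ and the decay of $|\Omega_k|$ extracted from $A_\infty$. A direct appeal to the $L^{1+\varepsilon}$-boundedness of the dyadic maximal operator is insufficient: its sharp constant scales like $1/\varepsilon \sim \tau$, which destroys any would-be absorption step. The Fujii--Wilson bound must instead be applied iteratively across the Calder\'on--Zygmund generations, so that the estimate reduces to a geometric series in a quantity of the form $2^\varepsilon \bigl(1 - c/\tau\bigr)^{1/(d+1)}$, whose convergence threshold matches exactly the claimed range of $r$.
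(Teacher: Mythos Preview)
The paper does not actually prove this proposition: it is quoted verbatim as the sharp reverse H\"older inequality of Hyt\"onen--P\'erez \cite{HP13} and used as a black box. Your proposal is precisely an outline of the original Hyt\"onen--P\'erez argument (factoring through $[w]_{A_\infty}^{\mathrm{FW}}$ and running a Calder\'on--Zygmund stopping-time argument on the dyadic level sets of $M^d_{Q_0}(w\chi_{Q_0})$), so there is no discrepancy in approach to discuss.

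Your sketch is sound in its essentials. One minor comment: the presentation in \cite{HP13} is slightly more streamlined than the ``recursive tail estimate plus iteration'' you describe. Rather than iterating a bound of the form $\sum_{k'\ge k+d+1}2^{k'}|\Omega_{k'}|\le 2^{d+1}\tau\cdot 2^k|\Omega_k|$ across generations, they write directly
\[
\int_{Q_0} w^{1+\varepsilon}\le \int_{Q_0}(M^d_{Q_0}w)^{\varepsilon}w
= w_{Q_0}^{\varepsilon}w(Q_0)+\varepsilon\int_{w_{Q_0}}^{\infty}\lambda^{\varepsilon-1}w(\Omega_\lambda)\,d\lambda,
\]
bound $w(\Omega_\lambda)\le 2^d\lambda|\Omega_\lambda|$ via the stopping cubes, and then use the Fujii--Wilson inequality on each stopping cube to absorb the resulting term $2^d\varepsilon\int_{w_{Q_0}}^{\infty}\lambda^{\varepsilon}|\Omega_\lambda|\,d\lambda$ back into a constant multiple of $\int_{Q_0}(M^d_{Q_0}w)^{\varepsilon}w$; the smallness of $\varepsilon\tau$ closes the loop in one step. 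Your iterated version is equivalent and equally valid, just a bit heavier in bookkeeping. The identification of the main obstacle (that the naive $L^{1+\varepsilon}$ bound for $M^d$ loses a factor $1/\varepsilon$) is exactly right.
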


Turn now to matrix weights. Given a $n\times n$ matrix $W$, define its operator norm by
$$\|W\|:=\sup_{u\in {\mathbb C}^n:|u|=1}|Wu|.$$
Observe that if $\{e_j\}$ is the standard orthogonal basis of ${\mathbb C}^n$, then
\begin{equation}\label{basis}
\|W\|\simeq\sum_{j=1}^n|We_j|.
\end{equation}
Further, if $V,W$ self-adjoint positive definite $n\times n$ matrices, then $\|VW\|=\|WV\|$.

We say that a matrix weight $W\in A_p, p>1,$ if
$$[W]_{A_p}:=\sup_{Q}\frac{1}{|Q|}\int_Q\Big(\frac{1}{|Q|}\int_Q\|W(x)^{1/p}W(y)^{-1/p}\|^{p'}dy\Big)^{p/p'}dx<\infty.$$
This definition was given by Roudenko \cite{R03}. Observe that for the scalar valued weights $w$ we obtain the standard $A_p$-constant $[w]_{A_p}$.
It was also shown in \cite{R03} that if $W\in A_p$, then for any $u\in {\mathbb C}^n$, the scalar weight $|W^{1/p}(x)u|^p$ belongs to the scalar $A_p$ and
\begin{equation}\label{scap}
[|W^{1/p}(x)u|^p]_{A_p}\le [W]_{A_p}.
\end{equation}

We say that $W\in A_1$ if
$$[W]_{A_1}:=\sup_{Q}\esssup_{y\in Q}\frac{1}{|Q|}\int_Q\|W(x)W(y)^{-1}\|dx<\infty.$$

Recall (see \cite[Prop. 1.2]{G03}) that given a norm $\rho$ on ${\mathbb C}^n$, there is a self-adjoint and positive definite matrix $A$, called a reducing operator, such that
$$\rho(u)\simeq |Au|\quad(u\in {\mathbb C}^n).$$
Using this result, given a $p>1$ and a cube $Q\subset {\mathbb R}^d$, one can define a reducing operator $V_{Q,p}$ such that
\begin{equation}\label{vqp}
|V_{Q,p}u|\simeq \Big(\frac{1}{|Q|}\int_Q|W^{-1/p}(y)u|^{p'}dy\Big)^{1/p'}\quad(u\in {\mathbb C}^n).
\end{equation}

For $V_{Q,p}$ such defined we will use the following standard properties.

\begin{prop}\label{pr1} For any $f\in L^p(Q)$,
$$\frac{1}{|Q|}\int_Q|V_{Q,p}^{-1}W^{-1/p}(y)f(y)|dy\lesssim \left(\frac{1}{|Q|}\int_Q|f(y)|^pdy\right)^{1/p}.$$
\end{prop}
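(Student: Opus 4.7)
The plan is to reduce the left-hand side to the reducing operator identity (\ref{vqp}) via a pointwise operator-norm bound, Hölder's inequality, and the basis equivalence (\ref{basis}).

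First I would observe the pointwise estimate
\[
|V_{Q,p}^{-1}W^{-1/p}(y)f(y)|\le \|V_{Q,p}^{-1}W^{-1/p}(y)\|\,|f(y)|.
\]
Since both $V_{Q,p}^{-1}$ and $W^{-1/p}(y)$ are self-adjoint, $(V_{Q,p}^{-1}W^{-1/p}(y))^{*}=W^{-1/p}(y)V_{Q,p}^{-1}$, and so the operator norm equals $\|W^{-1/p}(y)V_{Q,p}^{-1}\|$. This rewriting is the crucial step because it puts $V_{Q,p}^{-1}$ in position to act on vectors, allowing us to eventually exploit (\ref{vqp}).

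Next I would apply Hölder's inequality in the form
\[
\frac{1}{|Q|}\int_Q \|W^{-1/p}(y)V_{Q,p}^{-1}\|\,|f(y)|\,dy
\le \bigg(\frac{1}{|Q|}\int_Q\|W^{-1/p}(y)V_{Q,p}^{-1}\|^{p'}dy\bigg)^{1/p'}\bigg(\frac{1}{|Q|}\int_Q|f|^p dy\bigg)^{1/p},
\]
reducing matters to showing $\frac{1}{|Q|}\int_Q\|W^{-1/p}(y)V_{Q,p}^{-1}\|^{p'}dy\lesssim 1$. Using the basis equivalence (\ref{basis}) and the elementary inequality $(a_1+\cdots+a_n)^{p'}\le n^{p'-1}\sum_j a_j^{p'}$, this quantity is controlled by a constant multiple of
\[
\sum_{j=1}^n \frac{1}{|Q|}\int_Q |W^{-1/p}(y)V_{Q,p}^{-1}e_j|^{p'}\,dy.
\]

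Finally I would invoke (\ref{vqp}) with $u=V_{Q,p}^{-1}e_j$: each summand is equivalent to $|V_{Q,p}V_{Q,p}^{-1}e_j|^{p'}=|e_j|^{p'}=1$, so the whole sum is bounded by a dimensional constant. Chaining the inequalities yields the claim. There is no real obstacle here; the only point that requires a little care is the use of self-adjointness to convert $\|V_{Q,p}^{-1}W^{-1/p}(y)\|$ into $\|W^{-1/p}(y)V_{Q,p}^{-1}\|$, which is exactly what makes the reducing operator identity (\ref{vqp}) directly applicable to each basis vector $e_j$.
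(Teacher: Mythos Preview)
Your proof is correct and follows essentially the same approach as the paper: apply H\"older's inequality after the pointwise operator-norm bound, use the identity $\|V_{Q,p}^{-1}W^{-1/p}(y)\|=\|W^{-1/p}(y)V_{Q,p}^{-1}\|$ (the paper phrases this as the two self-adjoint matrices ``commuting in operator norm''), expand via (\ref{basis}), and then invoke (\ref{vqp}) on each $V_{Q,p}^{-1}e_j$. Your justification of the norm identity via adjoints is exactly the mechanism behind the paper's one-line remark.
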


\begin{proof} By H\"older's inequality,
$$\frac{1}{|Q|}\int_Q|V_{Q,p}^{-1}W^{-1/p}(y)f(y)|dy\le \left(\frac{1}{|Q|}\int_Q\|V_{Q,p}^{-1}W^{-1/p}(y)\|^{p'}dy\right)^{1/p'}\left(\frac{1}{|Q|}\int_Q|f(y)|^pdy\right)^{1/p}.$$
Next, using (\ref{basis}), (\ref{vqp}) and that the matrices $W^{-1/p}(y)$ and $V_{Q,p}^{-1}$ commute in operator norm, we obtain
\begin{eqnarray*}
\left(\frac{1}{|Q|}\int_Q\|V_{Q,p}^{-1}W^{-1/p}(y)\|^{p'}dy\right)^{1/p'}&\lesssim& \sum_{j=1}^n
\left(\frac{1}{|Q|}\int_Q|W^{-1/p}(y)V_{Q,p}^{-1}e_j|^{p'}dy\right)^{1/p'}\\
&\lesssim& \sum_{j=1}^n|V_{Q,p}V_{Q,p}^{-1}e_j|\le n,
\end{eqnarray*}
which, along with the previous estimate, completes the proof.
\end{proof}

\begin{prop}\label{pr2} Assume that $W\in A_p, p>1$. There is a constant $c_d>0$ such that for $s:=1+\frac{1}{c_d[W]_{A_p}}$,
$$
\Big(\frac{1}{|Q|}\int_Q\|W^{1/p}(x)V_{Q,p}\|^{sp}dx\Big)^{1/s}\lesssim [W]_{A_p}.
$$
\end{prop}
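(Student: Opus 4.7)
The plan is to reduce the claimed matrix reverse H\"older inequality to the scalar sharp reverse H\"older (Proposition~\ref{rhweights}) by testing against the standard basis via (\ref{basis}), and then to control the resulting $L^p$-average of $\|W^{1/p}(\cdot)V_{Q,p}\|^p$ by $[W]_{A_p}$ using the definition (\ref{vqp}) of the reducing operator.

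First, from (\ref{basis}) and the elementary finite-dimensional bound $(\sum_{j=1}^n a_j)^{sp}\lesssim\sum_j a_j^{sp}$, one has $\|W^{1/p}(x)V_{Q,p}\|^{sp}\lesssim \sum_{j=1}^n |W^{1/p}(x)V_{Q,p}e_j|^{sp}$. For each $j$, setting $u_j:=V_{Q,p}e_j$, the scalar weight $w_j(x):=|W^{1/p}(x)u_j|^p$ lies in the scalar $A_p$ class with $[w_j]_{A_p}\le[W]_{A_p}$ by (\ref{scap}). Taking $c_d$ to be the constant from Proposition~\ref{rhweights}, the exponent $s=1+\frac{1}{c_d[W]_{A_p}}$ does not exceed $1+\frac{1}{c_d[w_j]_{A_p}}$, so combining Proposition~\ref{rhweights} with Jensen yields
$$
\Big(\frac{1}{|Q|}\int_Q|W^{1/p}(x)u_j|^{sp}dx\Big)^{1/s}\lesssim \frac{1}{|Q|}\int_Q|W^{1/p}(x)u_j|^pdx
$$
for every $j$. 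Summing over $j$, using subadditivity of $t\mapsto t^{1/s}$, and applying (\ref{basis}) once more on the right, one arrives at the key reduction
$$
\Big(\frac{1}{|Q|}\int_Q\|W^{1/p}(x)V_{Q,p}\|^{sp}dx\Big)^{1/s}\lesssim \frac{1}{|Q|}\int_Q\|W^{1/p}(x)V_{Q,p}\|^pdx.
$$

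It then remains to bound this last average by $[W]_{A_p}$. Since $V_{Q,p}$ and $W^{1/p}(x)$ are self-adjoint positive definite, $\|W^{1/p}(x)V_{Q,p}\|=\|V_{Q,p}W^{1/p}(x)\|$, so setting $v=W^{1/p}(x)u$ in (\ref{vqp}) and moving the supremum in $|u|=1$ inside the average gives
$$
\|W^{1/p}(x)V_{Q,p}\|^{p'}=\sup_{|u|=1}|V_{Q,p}W^{1/p}(x)u|^{p'}\lesssim \frac{1}{|Q|}\int_Q\|W^{1/p}(x)W^{-1/p}(y)\|^{p'}dy.
$$
Raising to the power $p/p'$, averaging over $x\in Q$, and invoking the very definition of $[W]_{A_p}$ completes the argument.

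The hard part is this last step, which relies on moving $V_{Q,p}$ past $W^{1/p}(x)$ under the operator norm via the identity $\|AB\|=\|BA\|$ for self-adjoint positive-definite $A,B$ noted after (\ref{basis}), and then interpreting the $L^{p'}$-duality encoded in (\ref{vqp}) at the vector $v=W^{1/p}(x)u$. Once that is in place, everything else reduces to the scalar sharp reverse H\"older (Proposition~\ref{rhweights}) together with (\ref{scap}) and a harmless finite-dimensional norm equivalence.
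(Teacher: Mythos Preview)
Your proof is correct and follows essentially the same two-step strategy as the paper: first reduce the $L^{sp}$ average to the $L^p$ average via the scalar sharp reverse H\"older applied componentwise through (\ref{basis}) and (\ref{scap}), then bound the $L^p$ average by $[W]_{A_p}$ using the reducing operator (\ref{vqp}). The only cosmetic difference is that in the second step the paper invokes (\ref{basis}) once more to pass through basis vectors, whereas you take the supremum over unit vectors directly and move it inside the integral; both routes are equivalent and equally short.
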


\begin{proof}
Combining (\ref{srh}) with (\ref{scap}) and (\ref{basis}), we obtain
\begin{equation}\label{rh}
\Big(\frac{1}{|Q|}\int_Q\|W^{1/p}(x)V_{Q,p}\|^{sp}dx\Big)^{1/s}\lesssim \frac{1}{|Q|}\int_Q\|W^{1/p}(x)V_{Q,p}\|^{p}dx.
\end{equation}
Further, by (\ref{basis}) and (\ref{vqp}),
$$\frac{1}{|Q|}\int_Q\|W^{1/p}(x)V_{Q,p}\|^{p}dx\lesssim [W]_{A_p},$$
which completes the proof.
\end{proof}

\section{The Christ--Goldberg maximal operator}
In this section we prove Theorems \ref{swtb} and \ref{as}. The proof of Theorem \ref{swtb}
will be based on several ingredients. The first one is a pointwise sparse bound for the local dyadic Christ--Goldberg maximal operator defined by
$$M^d_{Q,W,p}f(x):=\sup_{R\in {\mathcal D}(Q), R\ni x}\frac{1}{|R|}\int_R|W^{1/p}(x)W^{-1/p}(y)f(y)|dy.$$
In what follows, $V_{Q,p}$ denotes the reducing operator defined by (\ref{vqp}).

\begin{lemma}\label{spb} There exists a sparse family ${\mathcal S}\subset {\mathcal D}(Q)$ such that for every $r>0$ and for a.e. $x\in Q$,
\begin{equation}\label{r>0}
M^d_{Q,W,p}f(x)^r\le  2^r\sum_{R\in {\mathcal S}}\left(\|W^{1/p}(x)V_{R,p}\|\frac{1}{|R|}\int_R|V_{R,p}^{-1}W^{-1/p}(y)f(y)|dy\right)^r\chi_{R}(x).
\end{equation}
\end{lemma}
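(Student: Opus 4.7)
The plan is to take $\mathcal{S}$ to be the family of principal cubes produced by a stopping-time argument in which the scalar function used to stop depends on the current principal cube. Writing $a_P(f):=\frac{1}{|P|}\int_P |V_{P,p}^{-1}W^{-1/p}(y)f(y)|\,dy$, I declare $Q\in\mathcal{S}$ and inductively, for each $P\in\mathcal{S}$, add to $\mathcal{S}$ the maximal dyadic subcubes $R''\subsetneq P$ satisfying
$$\frac{1}{|R''|}\int_{R''}|V_{P,p}^{-1}W^{-1/p}(y)f(y)|\,dy > 2\,a_P(f).$$
The stopping condition, together with the disjointness of the children of $P$, gives
$$2\,a_P(f)\sum_{R''\in\mathrm{ch}(P)}|R''| < \sum_{R''}\int_{R''}|V_{P,p}^{-1}W^{-1/p}(y)f(y)|\,dy \le |P|\,a_P(f),$$
so $\sum_{R''\in\mathrm{ch}(P)}|R''|\le |P|/2$, and $\mathcal{S}$ is $\tfrac{1}{2}$-sparse.

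For the pointwise estimate, I fix $x\in Q$ and $R\in\mathcal{D}(Q)$ with $R\ni x$, and let $P(R)\in\mathcal{S}$ denote the smallest principal cube containing $R$. By minimality of $P(R)$, either $R=P(R)$, or $R\subsetneq P(R)$ and $R$ is not contained in any stopping child of $P(R)$; in the latter case the maximality of those children forces $R$ itself to fail the stopping test, and in either case
$$\frac{1}{|R|}\int_R|V_{P(R),p}^{-1}W^{-1/p}(y)f(y)|\,dy \le 2\,a_{P(R)}(f).$$
Factoring $W^{1/p}(x)W^{-1/p}(y)=\bigl(W^{1/p}(x)V_{P(R),p}\bigr)\bigl(V_{P(R),p}^{-1}W^{-1/p}(y)\bigr)$ and applying sub-multiplicativity of the operator norm then yields
$$\frac{1}{|R|}\int_R|W^{1/p}(x)W^{-1/p}(y)f(y)|\,dy \le 2\,\|W^{1/p}(x)V_{P(R),p}\|\,a_{P(R)}(f).$$
Taking the supremum over all $R\ni x$ in $\mathcal{D}(Q)$ produces
$$M^d_{Q,W,p}f(x) \le 2\sup_{P\in\mathcal{S},\,P\ni x}\|W^{1/p}(x)V_{P,p}\|\,a_P(f),$$
and the $r$-th power statement in the lemma then follows from the elementary inequality $(\sup_P c_P)^r=\sup_P c_P^r\le \sum_P c_P^r$ valid for $c_P\ge 0$ and any $r>0$.

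The main subtlety I anticipate is choosing a stopping rule compatible with the two matrix-dependent factors $\|W^{1/p}(x)V_{R,p}\|$ and $a_R(f)$ appearing on the right-hand side, both of which involve the reducing operator $V_{R,p}$ attached to the cube. The resolution is to anchor each level of the stopping construction to the fixed matrix $V_{P,p}$ of the current principal cube $P$; this reduces the level-wise question to a classical scalar principal-cubes construction applied to the scalar function $|V_{P,p}^{-1}W^{-1/p}(\cdot)f|$, while simultaneously allowing the operator-norm factor $\|W^{1/p}(x)V_{P,p}\|$ to pass cleanly through the averaging via sub-multiplicativity.
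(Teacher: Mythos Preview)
Your proof is correct and follows essentially the same stopping-time construction as the paper's: the paper also stops, inside each current cube $P$, on the maximal dyadic subcubes where the average of $|V_{P,p}^{-1}W^{-1/p}f|$ exceeds twice its average over $P$, producing the same $\tfrac12$-sparse family. The only presentational difference is that the paper packages the argument as an iterated one-step inequality $M^d_{Q,W,p}f(x)^r\le 2^r(\cdots)^r\chi_Q+\sum_j M^d_{R_j,W,p}f(x)^r\chi_{R_j}$ and then recurses, whereas you bound $M^d_{Q,W,p}f(x)$ directly by a supremum over principal cubes and then invoke $(\sup_P c_P)^r\le\sum_P c_P^r$.
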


\begin{proof} We will use almost the same argument as in \cite[Section 5.1.2]{IPRR21}. Let $M^d_Q$ denote the standard local dyadic maximal operator in the scalar setting, that is,
$$M^d_Q\f(x)=\sup_{R\ni x, R\in {\mathcal D}(Q)}\frac{1}{|R|}\int_{R}|\f|.$$
Consider the set
$$\Omega:=\{x\in Q:M_Q^d(|V_{Q,p}^{-1}W^{-1/p}f|)>2\frac{1}{|Q|}\int_Q|V_{Q,p}^{-1}W^{-1/p}f|\}.$$
Then $|\Omega|\le \frac{1}{2}|Q|$. Write $\Omega$ as the union of the maximal pairwise disjoint dyadic cubes, $\Omega=\cup_jR_j$.

Assume that $x\in Q$ and $R\in {\mathcal D}(Q)$ is such that $x\in R$ and $R\cap (Q\setminus \Omega)\not=\emptyset$. Then
\begin{eqnarray*}
\frac{1}{|R|}\int_R|W^{1/p}(x)W^{-1/p}(y)f(y)|dy&\le& \|W^{1/p}(x)V_{Q,p}\|\frac{1}{|R|}\int_R|V_{Q,p}^{-1}W^{-1/p}(y)f(y)|dy\\
&\le& 2\|W^{1/p}(x)V_{Q,p}\|\frac{1}{|Q|}\int_Q|V_{Q,p}^{-1}W^{-1/p}(y)f(y)|dy.
\end{eqnarray*}
From this, setting
$$F_j:=\{x\in R_j: M^d_{Q,W,p}f(x)\not=M^d_{R_j,W,p}f(x)\},$$
we obtain that for $x\in \cup_jF_j\cup(Q\setminus \Omega)$,
$$
M^d_{Q,W,p}f(x)\le 2\|W^{1/p}(x)V_{Q,p}\|\frac{1}{|Q|}\int_Q|V_{Q,p}^{-1}W^{-1/p}(y)f(y)|dy.
$$

Hence, for all $x\in Q$,
\begin{eqnarray*}
M^d_{Q,W,p}f(x)&\le& 2\left(\|W^{1/p}(x)V_{Q,p}\|\frac{1}{|Q|}\int_Q|V_{Q,p}^{-1}W^{-1/p}(y)f(y)|dy\right)\chi_{\cup_jF_j\cup(Q\setminus \Omega)}\\
&+&\sum_jM^d_{R_j,W,p}f(x)\chi_{R_j\setminus F_j}.
\end{eqnarray*}
From this, for any $r>0$,
\begin{eqnarray*}
M^d_{Q,W,p}f(x)^r&\le& 2^r\left(\|W^{1/p}(x)V_{Q,p}\|\frac{1}{|Q|}\int_Q|V_{Q,p}^{-1}W^{-1/p}(y)f(y)|dy\right)^r\chi_Q\\
&+&\sum_jM^d_{R_j,W,p}f(x)^r\chi_{R_j}.
\end{eqnarray*}
Iterating this estimate, we obtain a $\frac{1}{2}$-sparse family ${\mathcal S}\subset {\mathcal D}(Q)$, for which (\ref{r>0}) holds.
\end{proof}

Our second ingredient is a sparse operator defined in the scalar-valued setting for a sparse family ${\mathcal S}\subset {\mathscr D}$ and a sequence of non-negative functions $\la:=\{\la_Q\}_{Q\in {\mathcal S}}$ by
$$T_{\la,{\mathcal S}}\psi(x):=\sum_{Q\in {\mathcal S}}\la_Q(x)\Big(\frac{1}{|Q|}\int_Q\psi\Big)\chi_Q(x).$$

\begin{lemma}\label{T} Suppose that there exist $r>1$ and $A>0$ such that
$$\sup_{Q\in {\mathcal S}}\Big(\frac{1}{|Q|}\int_Q\la_Q(x)^rdx\Big)^{1/r}\le A.$$
Then
$$\|T_{\la,{\mathcal S}}\psi\|_{L^{1,\infty}}\lesssim Ar'\|\psi\|_{L^1}.$$
\end{lemma}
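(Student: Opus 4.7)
My plan is to combine a Calder\'on--Zygmund decomposition of $\psi$ at a suitable height with an $L^p$ estimate of $T_{\la,\mc S}$, reducing the weak-type bound to a sparse-plus-duality computation.

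Fix $t>0$ and apply CZ at height $t$ in the dyadic lattice $\ms D\supset\mc S$, producing disjoint maximal cubes $\{P_j\}$ with $|\Omega|\le\|\psi\|_1/t$ (where $\Omega=\bigcup_jP_j$) together with the splitting $\psi=g+b$, $b=\sum_jb_j$, satisfying $\|g\|_\infty\lesssim t$, $\|g\|_1\le\|\psi\|_1$, each $b_j$ supported on $P_j$ with $\int b_j=0$. The decisive observation is dyadic: for $x\notin\Omega$ and any $Q\in\mc S$ with $Q\ni x$, every $P_j$ meeting $Q$ is strictly contained in $Q$ (else $Q\subseteq P_j\subseteq\Omega$), so $\frac{1}{|Q|}\int_Q b=0$ and hence $T_{\la,\mc S}b(x)=0$. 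Consequently, $|\{T_{\la,\mc S}\psi>t\}|\le|\Omega|+|\{T_{\la,\mc S}g>t/2\}|$, whose first term is already controlled by $\|\psi\|_1/t$.

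For the good-part term I would use Chebyshev at $L^p$: because $\|g\|_p^p\lesssim t^{p-1}\|\psi\|_1$, it is enough to prove $\|T_{\la,\mc S}\|_{L^p\to L^p}\lesssim (Ar')^{1/p}$ for a well-chosen $p>1$. Duality gives $\langle T_{\la,\mc S}g,h\rangle=\sum_Q\bigl(\frac{1}{|Q|}\int_Qg\bigr)\int_Q\la_Qh$, and a H\"older step inside each cube using $(\int_Q\la_Q^r)^{1/r}\le A|Q|^{1/r}$ extracts $A$ and leaves $A\sum_Q\bigl(\frac{1}{|Q|}\int_Qg\bigr)|Q|^{1/r}(\int_Qh^{r'})^{1/r'}$. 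Sparseness (through the pairwise disjoint sets $E_Q$ with $|E_Q|\gtrsim|Q|$, together with the pointwise bounds $\frac{1}{|Q|}\int_Qg\le Mg(x)$ and $(\frac{1}{|Q|}\int_Qh^{r'})^{1/r'}\le M(h^{r'})^{1/r'}(x)$ for $x\in E_Q$) converts the sum into $\int Mg\cdot(M(h^{r'}))^{1/r'}dx$; a final H\"older with exponents $p,p'$ together with the sharp bound $\|M\|_{L^p\to L^p}\lesssim p'$ then yields the $L^p$-operator norm. Alternatively, testing against $h=\chi_E$ (restricted weak type) exploits $M\chi_E\le 1$ pointwise and avoids any blow-up in the dual maximal function.

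The main obstacle will be to choose $p$ so as to capture precisely the factor $Ar'$ with only a single power of $A$. The local H\"older constrains $p\le r$, while $M(h^{r'})$ is only bounded on $L^{p'/r'}$ with constant $\sim p'/(p'-r')$ that blows up as $p'\to r'$; a natural balance is $p'\approx r'+1$, making $(p')^p$ of order $r'$ and the remaining constant of order $1$. Verifying that this choice actually produces the clean $Ar'$---and not, say, $A^{1+1/r'}r'$---is the delicate part of the argument and likely requires either a refined Lorentz-space estimate for $M$ or a CZ at the adapted height $t/A$ (so that $\|g\|_\infty\lesssim t/A$), which reshuffles the $A$-dependence and should close the argument.
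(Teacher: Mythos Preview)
Your approach matches the paper's: Calder\'on--Zygmund decomposition, the bad part vanishes off $\Omega$ by the dyadic containment observation, and the good part is controlled via a strong $L^p$ bound obtained exactly as you describe (duality, H\"older inside each cube against $\la_Q^r$, then sparseness to pass to $\int Mg\cdot M_{r'}h$). The paper's specific choices make the endgame cleaner than your sketch: it takes $p'=2r'$ so that $M_{r'}$ lands in $L^2$ with a uniform constant, yielding $\|T_{\la,\mc S}\|_{L^{(2r')'}\to L^{(2r')'}}\lesssim Ar'$, and it runs the CZ decomposition at a \emph{free} height $\gamma$ which is optimized at the end---this optimization (the minimizer is $\gamma\simeq (Ar')^{-1}$ at level $1$) is precisely what removes the spurious extra power of $A$ you anticipated, and is equivalent to your proposed adapted height $\sim t/A$.
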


\begin{proof} Let us show first that $T_{\la,{\mathcal S}}$ is bounded on $L^{(2r')'}$ and
\begin{equation}\label{bound}
\|T_{\la,{\mathcal S}}\psi\|_{L^{(2r')'}}\lesssim Ar'\|\psi\|_{L^{(2r')'}}.
\end{equation}
By H\"older's inequality and by sparseness,
\begin{eqnarray*}
\int_{{\mathbb R}^d}(T_{\la,{\mathcal S}}\psi)\f&=&\sum_{Q\in {\mathcal S}}\int_Q\la_Q\f\Big(\frac{1}{|Q|}\int_Q\psi\Big)\le A\sum_{Q\in {\mathcal S}}\Big(\frac{1}{|Q|}\int_Q\f^{r'}\Big)^{1/r'}\Big(\frac{1}{|Q|}\int_Q\psi\Big)|Q|\\
&\lesssim& A\sum_{Q\in {\mathcal S}}\int_{E_Q}(M_{r'}\f)M\psi\lesssim A\int_{{\mathbb R}^d}(M_{r'}\f)M\psi.
\end{eqnarray*}
From this, applying H\"older's inequality again, we obtain
\begin{eqnarray*}
\int_{{\mathbb R}^d}(T_{\la,{\mathcal S}}\psi)\f\lesssim A\|M_{r'}\f\|_{L^{2r'}}\|M\psi\|_{L^{(2r')'}}\lesssim Ar'\|\f\|_{L^{2r'}}\|\psi\|_{L^{(2r')'}},
\end{eqnarray*}
which, by duality, implies (\ref{bound}).

Take $\ga>0$ which will be chosen later on. Using the standard Calder\'on--Zygmund decomposition, write the set $\Omega:=\{x\in {\mathbb R}^d: M^{\mathscr D}\psi(x)>\ga\}$ as the union of the
maximal cubes $Q_j\in {\mathscr D}$, and set
$$b:=\sum_j\Big(\psi-\frac{1}{|Q_j|}\int_{Q_j}\psi\Big)\chi_{Q_j},\quad g:=\psi-b.$$
Then, observing that for $x\not\in \Omega$, $T_{\la,{\mathcal S}}b(x)=0$, and using (\ref{bound}), we obtain
\begin{eqnarray*}
|\{x\in {\mathbb R}^d:T_{\la,{\mathcal S}}\psi(x)>1\}|&\le& |\Omega|+|\{x\not\in \Omega: T_{\la,{\mathcal S}}g(x)>1\}|\\
&\lesssim& \frac{1}{\ga}\|\psi\|_{L^1}+(Ar')^{(2r')'}\ga^{(2r')'-1}\|\psi\|_{L^1}.
\end{eqnarray*}
Optimizing this expression with respect to $\ga$, we obtain
$$|\{x\in {\mathbb R}^d:T_{\la,{\mathcal S}}\psi(x)>1\}|\lesssim Ar'\|\psi\|_{L^1},$$
which completes the proof.
\end{proof}

\begin{proof}[Proof of Theorem \ref{swtb}] We start by showing that
\begin{equation}\label{fp}
\|M_{W,p}f\|_{L^{p,\infty}}\lesssim [W]_{A_p}^{\frac{2}{p}}\|f\|_{L^p}.
\end{equation}
The proof follows some ideas used in \cite{CUIMPRR21}.
By Lemma \ref{3n}, it suffices to prove the theorem for the dyadic maximal operator $M^{\mathscr D}_{W,p}f$ (where the supremum is taken over all cubes $Q\in {\mathscr D}$ containing the point $x$).
In turn, by the standard limiting argument, given a fixed cube $Q\in {\mathscr D}$, it suffices to prove the theorem for $M^d_{Q,W,p}f$.

Applying Lemma \ref{spb} with $r=p$ along with Proposition \ref{pr1} yields
$$M^d_{Q,W,p}f(x)^p\lesssim \sum_{R\in {\mathcal S}}\left(\|W^{1/p}(x)V_{R,p}\|^p\frac{1}{|R|}\int_R|f(y)|^pdy\right)\chi_{R}(x).$$
From this, setting
$$\la_R(x):=\|W^{1/p}(x)V_{R,p}\|^p,$$
we obtain that
$$\|M^d_{Q,W,p}\|_{L^p\to L^{p,\infty}}\lesssim \|T_{\la, {\mathcal S}}\|_{L^1\to L^{1,\infty}}^{1/p}.$$
Now, Lemma \ref{T} along with Proposition \ref{pr2} implies
$$\|T_{\la, {\mathcal S}}\|_{L^1\to L^{1,\infty}}\lesssim [W]_{A_p}^2,$$
which, along with the previous estimate, proves (\ref{fp}).

Turn to the second part of the theorem by showing the sharpness of (\ref{fp}) in the scalar case. We will show that for every natural $N\ge 100$, there is a weight $w$ such that $[w]_{A_p}\simeq N$ and
$\|M_{w,p}\|_{L^p\to L^{p,\infty}}\gtrsim N^{2/p}$. From this we will clearly obtain that $\f_{M_p}(t)\gtrsim t^{2/p}$.

The construction of the example is a modification of that in \cite{LLOR23}. We begin with the notations presented there. For $k=3,\ldots, N$ we denote $J_k=[2^k, 2^{k+1})$. We will split $J_k$ into small intervals.
Set $I_k=[2^k, 2^k+k)$ and $L_k= J_k\setminus I_k=[2^k+k, 2^{k+1})$. Let $L_k^{-}$ and $L_k^{+}$ be the left and right halves of $L_k$, respectively. Next we define $(L_k^{-})^1$ to be the right half of $L_k^{-}$ and $(L_k^{+})^1$ the left half of $L_k^{+}$. Then
\begin{enumerate}
\item[$\bullet$]when $(L_k^{-})^j=[a_{k}^j, b_k^j)$ is defined, let $(L_k^{-})^{j+1}=[a_{k}^{j+1}, b_{k}^{j+1})$ satisfy that
\[
b_{k}^{j+1}=a_k^j,\qquad |(L_k^{-})^{j+1}|= \frac 12 |(L_k^{-})^j|;
\]
\item[$\bullet$]when $(L_k^{+})^j=[c_{k}^j, d_k^j)$ is defined, let $(L_k^{+})^{j+1}=[c_{k}^{j+1}, d_{k}^{j+1})$ satisfy that
\[
c_{k}^{j+1}=d_k^j,\qquad |(L_k^{+})^{j+1}|= \frac 12 |(L_k^{+})^j|.
\]
\end{enumerate}
The process is stopped when we have $(L_k^{-})^{k-1}$ and $(L_k^{+})^{k-1}$ defined, and we simply define
\[
(L_k^{-})^{k}= [2^k+k, 2^k+k+ \frac{|L_k^-|}{2^{k-1}}),\,\, (L_k^{+})^{k}= [2^{k+1}-\frac{|L_k^+|}{2^{k-1}}, 2^{k+1}).
\]Now we have
\[
J_k=I_k  \cup   \mathop{\cup}_{j=1}^k \big( (L_k^{-})^j \cup  (L_k^{+})^j \big)=: I_k  \cup   \mathop{\cup}_{j=1}^k L_k^j.
\]

Define
\[
w_k:= 2^{k+1}\chi_{I_k}+ \sum_{j=\lfloor \log_{2} k \rfloor}^{k} 2^{j}\chi_{L_k^j}+ k \sum_{j=1}^{\lfloor \log_{2} k \rfloor-1}  \chi_{L_k^j}.
\]
and our weight on $[0, 2^{N+2}]$ is
\begin{equation*}
w(x):=\begin{cases}
 \chi_{[0, 8)}(x)+ \sum\limits_{k=3}^N 2^{(k+1)(p-1)}w_k(x), \quad & x\in [0, 2^{N+1}),\\
2^{(N+1)p-1}, \quad & x=2^{N+1},\\
w(2^{N+2}-x), \quad & x\in [2^{N+1}, 2^{N+2}].
 \end{cases}
\end{equation*}
Finally we extend $w(x)$ from $[0, 2^{N+2}]$ to $\mathbb R$ periodically with period $2^{N+2}$. Since $w^{1/p}(x)>x$ on $I_k$, we have
\begin{align*}
\| w^{1/p}M(\chi_{[0,1]}) \|_{L^{p,\infty}}^p&\ge |\{x\in (1, \infty): w^{1/p}(x) M(\chi_{[0,1]}) > 1\}|\\
&\ge |\{x\in (1, \infty): w^{1/p}(x)> x\}|\ge \sum_{k=3}^N |I_k|=\sum_{k=3}^Nk\simeq N^2.
\end{align*}
From this, $\|M_{w,p}\|_{L^p\to L^{p,\infty}}\gtrsim N^{2/p}$. Hence, in order to prove the claim, it remains to check that $[w]_{A_p}\simeq N$.
Since $w$ is periodic on $\mathbb R$ and symmetrical on $[0, 2^{N+2}]$, it suffices to prove that
\[
\sup_{I \subset [0, 2^{N+1}]}\frac {w(I)}{|I| } \Big(\frac 1{|I|} \int_I w^{-\frac 1{p-1}}\Big)^{p-1}\simeq N
\]

First, observe that
\[
|L_k^j|=2^{-j}(2^k-k),\quad j=1,2,\ldots, k-1,\qquad |L_k^k|=2^{1-k}(2^k-k).
\]
Hence when $I= [0, 2^{N+1}]$ we have
\begin{align*}
\frac {w(I)}{|I| }&= 2^{-(N+1)}\Big(8+\sum_{k=3}^N 2^{(k+1) (p-1)} \Big(k 2^{k+1}+ \sum_{j=\lfloor \log_{2} k \rfloor}^{k} 2^{j}|L_k^j| + k \sum_{j=1}^{\lfloor \log_{2} k \rfloor-1}|L_k^j|\Big)  \Big)\\
&\simeq 2^{-(N+1)}\Big(8+\sum_{k=3}^N 2^{(k+1) (p-1)} k 2^{k+1}\Big)\simeq N2^{N(p-1)}
\end{align*}
and since $1<p<2$,
\begin{align*}
&\frac 1{|I|} \int_I w^{-\frac 1{p-1}}\\
&=  2^{-(N+1)} \Big(8+ \sum_{k=3}^N 2^{-(k+1)} \Big(k 2^{-\frac{k+1}{p-1}}+ \sum_{j=\lfloor \log_{2} k \rfloor}^{k} 2^{-\frac j{p-1}}|L_k^j| + k^{-\frac 1{p-1}} \sum_{j=1}^{\lfloor \log_{2} k \rfloor-1}|L_k^j| \Big)\Big)\\
&\simeq  2^{-(N+1)} \Big(8+ \sum_{k=3}^N 2^{-(k+1)} \Big(k 2^{-\frac{k+1}{p-1}}+ \sum_{j=\lfloor \log_{2} k \rfloor}^{k} 2^{-\frac j{p-1}} 2^{k-j} + k^{-\frac 1{p-1}} \sum_{j=1}^{\lfloor \log_{2} k \rfloor-1} 2^{k-j}\Big)\Big) \\
&\lesssim  2^{-(N+1)} \Big(8+ \sum_{k=3}^N (k 2^{-kp'}+ k^{-p'}+ k^{-\frac 1{p-1}})\Big)  \\
&\simeq 2^{-N}.
\end{align*}
Thus
\[
\frac {w(I)}{|I| } \Big(\frac 1{|I|} \int_I w^{-\frac 1{p-1}}\Big)^{p-1}\simeq N.
\]

We are left to prove that for any $I\subset [0, 2^{N+1}]$, there holds that
\begin{equation}\label{eq:target}
\frac {w(I)}{|I| } \Big(\frac 1{|I|} \int_I w^{-\frac 1{p-1}}\Big)^{p-1}\lesssim N.
\end{equation}
As that in \cite{LLOR23}, our weight is a step function (one may view $\bigcup_{j=1}^{\lfloor \log_{2} k \rfloor-1}L_k^j$ as a single interval but it is not necessary), and the jump of $w$ from the interval $[0,8)$ to $I_3$ is $2^{4p}$ for each two other adjacent intervals from the definition of $w$, the jump is at most $2^{p+1}$.
From this observation we have the following

\vskip 1mm
{\bf Claim A.} If $I$ intersects at most $m$ intervals from the definition of $w$, then
$$
\frac{w(I)}{|I|}\le \max_{x\in I} w(x)\le 2^{4p+(m-2)(p+1)} \essinf\limits_{x\in I}w(x).
$$
In what follows we will prove \eqref{eq:target} according to the size of $I$.

\vskip 1mm
{\bf Case 1}. $|I|\le 8$. In this case, note that in each $J_k$ $(k\ge 3)$,  $(L_k^-)^{k-1}, (L_k^-)^k$ and $(L_k^+)^{k-1},(L_k^+)^k$ are the smallest intervals, and
\[
|(L_k^-)^{k-1}| =|(L_k^-)^k| =|(L_k^+)^{k-1}|  =  |(L_k^+)^k| = 1-\frac k{2^k}> \frac 12.
\]
Hence $I$ intersects at most $17$ intervals from the definition of $w$, and we are in position to apply Claim A with $m=17$.

\vskip 1mm
{\bf Case 2}. $|I|> 8$. In this case, we may assume $|I|\in (2^{k_0}, 2^{k_0+1}]$ with some $k_0\ge 3$. We may further assume $k_0<N-10$ as otherwise
\begin{align*}
\frac {w(I)}{|I| } \Big(\frac 1{|I|} \int_I w^{-\frac 1{p-1}}\Big)^{p-1}\lesssim \frac {w([0, 2^{N+1}])}{|[0, 2^{N+1}]| } \Big(\frac 1{|[0, 2^{N+1}]|} \int_0^{2^{N+1}} w^{-\frac 1{p-1}}\Big)^{p-1}\simeq N.
\end{align*}
\vskip 1mm
{\bf Case 2a}. $I\subset [0, 2^{k_0+10}]$. Then similarly to above,
\[
\frac {w(I)}{|I| } \Big(\frac 1{|I|} \int_I w^{-\frac 1{p-1}}\Big)^{p-1}\lesssim \frac {w([0, 2^{k_0+10}])}{|[0, 2^{k_0+10}]| } \Big(\frac 1{|[0, 2^{k_0+10}]|} \int_0^{2^{k_0+10}} w^{-\frac 1{p-1}}\Big)^{p-1}\simeq k_0.
\]

\vskip 1mm
{\bf Case 2b}. $I\not\subset [0, 2^{k_0+10}]$. Then $I\subset [2^{k_0+9}, 2^{N+1}]$. Denote by $c_k$ the center of $L_k$.
\vskip 1mm
{\bf Case 2b-a}. $I$ contains some $c_k$ with $k\ge k_0+9$. Then the estimate is trivial since $I\subset (L_k^-)^1\cup (L_k^+)^1$ and $w$ is a constant on $I$.

\vskip 1mm
{\bf Case 2b-b}. $I$ does not contain any $c_k$. In this case we may assume $I\subset (c_{\ell}, c_{\ell+1})$ for some $k_0+8 \le \ell \le N$. Suppose that $I=[a,b]$ and  $a\in (L_\ell^+)^{j}$ for some $j$. If $j \le  \ell- k_0-4$, then
\[
| (L_\ell^+)^{j+1}|= |L_\ell^+| 2^{-(j+1)}= \frac{2^\ell-\ell}{ 2^{j+2}}> 2^{k_0+1},
\]
so that $I$ will intersect at most $(L_\ell^+)^{j}$ and $(L_\ell^+)^{j+1}$ and we again apply Claim A with $m=2$.

If $j \ge \ell- k_0-3$, note that then
\[
I \subset \mathop{\cup}_{j=\ell-k_0-3}^\ell (L_\ell^+)^j \cup \mathop{\cup}_{i=\ell-k_0-2}^{\ell+1} (L_{\ell+1}^-)^i \cup I_{\ell+1}.
\]
Here $i\ge \ell-k_0-2$ since
\[
|(L_{\ell+1}^-)^{\ell-k_0-2} |= \frac{2^{\ell+1}-(\ell+1)}{2^{\ell-k_0-1 }}> \frac{2^{\ell}}{2^{\ell-k_0-1 }}=2^{k_0+1}\ge  \ell(I).
\]
Hence we have
\begin{align*}
&\frac {w(I)}{|I| \essinf\limits_{x\in I} w(x)}\\&\le \frac{\sum\limits_{j=\ell-k_0-3}^\ell w((L_\ell^+)^j )+ \sum\limits_{i=\ell-k_0-2}^{\ell+1} w((L_{\ell+1}^-)^i )+ w(I_{\ell+1}) }{2^{k_0} 2^{\ell-k_0-3} 2^{(\ell+1)(p-1)}}\\
&\lesssim  \frac{\sum\limits_{j=\ell-k_0-3}^\ell \max\{2^j, \ell\}\cdot 2^{\ell-j}+  \sum\limits_{i=\ell-k_0-2}^{\ell+1}\max\{ 2^i, \ell+1\} \cdot 2^{\ell+1-i} + (\ell+1)2^{\ell+2}}{2^\ell }
\lesssim \ell,
\end{align*}
from which \eqref{eq:target} follows immediately.

It remains to consider the case $a\in I_{\ell+1}\cup L_{\ell+1}^-$. However, in this case we just need to discuss whether $b\in (L_{\ell+1}^-)^{j}$ with some  $j \le  \ell- k_0-3$ or not, which is completely similar. This completes the proof.
\end{proof}

\begin{remark}\label{r3} Recall that the Hilbert transform is defined by
$$Hf(x)=\text{P.V.}\int_{{\mathbb R}}\frac{f(t)}{x-t}dt.$$
Let $w$ be the weight constructed in the sharpness part of Theorem \ref{swtb}. Using that  $H(\chi_{[0,1]})(x)>\frac{1}{x}$ for all $x>1$,
we have
$$\|w^{1/p}H(\chi_{[0,1]})\|_{L^{p,\infty}}^p\ge |\{x\in (1, \infty): w^{1/p}(x)> x\}|\gtrsim N^2.$$
From this, exactly as for the maximal operator, we obtain that $\f_{H_{p}}(t)\gtrsim~t^{\frac{2}{p}}$.
\end{remark}

\begin{remark}\label{r1}
Recall that the scalar $A_{\infty}$ constant is defined by
$$[w]_{A_{\infty}}:=\sup_{Q}\frac{1}{w(Q)}\int_QM(w\chi_Q).$$
Given a matrix weight $W$ and $p>1$, define its $A_{\infty, p}^{sc}$ constant by
$$[W]_{A_{\infty, p}^{sc}}:=\sup_{u\in {\mathbb C}^n}[|W^{1/p}u|^p]_{A_{\infty}}.$$
Then it is easy to show that Proposition \ref{pr2} holds with $s=1+\frac{1}{c[W]_{A_{\infty, p}^{sc}}}$.
As a result, a statement of Theorem \ref{swtb} can be written in the form
$$
\|M_{W,p}f\|_{L^{p,\infty}}\lesssim \big([W]_{A_p}[W]_{A_{\infty, p}^{sc}}\big)^{1/p}\|f\|_{L^p}.
$$
\end{remark}

\begin{remark}\label{r2}
Lemma \ref{spb} can also be used in order to give a simple proof of the strong type bound
\begin{equation}\label{sqb}
\|M_{W,p}f\|_{L^p}\lesssim [W]_{A_p}^{\frac{1}{p-1}}\|f\|_{L^p}\quad(1<p<\infty)
\end{equation}
(see \cite{B93,IM19}).

Indeed, as before, it suffices to prove (\ref{sqb}) for $M^d_{Q,W,p}f$. By Lemma~\ref{spb},
\begin{eqnarray*}
\int_QM^d_{Q,W,p}f(x)^pdx&\lesssim& \sum_{R\in {\mathcal S}}\int_R\|W^{1/p}(x)V_{R,p}\|^pdx
\left(\frac{1}{|R|}\int_R|V_{R,p}^{-1}W^{-1/p}(y)f(y)|dy\right)^p\\
&\lesssim& [W]_{A_p}\sum_{R\in {\mathcal S}}\left(\frac{1}{|R|}\int_R|V_{R,p}^{-1}W^{-1/p}(y)f(y)|dy\right)^p|R|\\
&\lesssim& [W]_{A_p}\int_Q\tilde M_{w,Q}^{\mathscr D}f(x)^pdx,
\end{eqnarray*}
where
$$
\tilde M_{w,Q}^{\mathscr D}f(x):=\sup_{R\ni x, R\in {\mathcal D}(Q)}\frac{1}{|R|}\int_R|V_{R,p}^{-1}W^{-1/p}(y)f(y)|dy.
$$
Next, the standard machinery based on the reverse H\"older estimate shows that
$$M_{w,Q}^{\mathscr D}f(x)\lesssim M_{Q,p-\e}(|f|)(x),$$
where $\e\simeq [W]_{A_p}^{-\frac{1}{p-1}}$. Hence,
$$\int_QM^d_{Q,W,p}f(x)^pdx\lesssim [W]_{A_p}\int_QM_{Q,p-\e}(|f|)(x)^pdx\lesssim [W]_{A_p}^{\frac{p}{p-1}}\int_Q|f(x)|^pdx,$$
which proves (\ref{sqb}).
\end{remark}

We conclude this section by proving Theorem \ref{as}.

\begin{proof}[Proof of Theorem \ref{as}]
Denote
$${\mathcal N}:=\|M_{p,w}\|_{L^p\to L^{p,\infty}}.$$
Then trivially
\begin{equation}\label{eq:hardy}
\| w^{\frac 1p} \mathcal H(f w^{-\frac 1p})\|_{L^{p, \infty}}\le \mathcal N \|f\|_{L^p},
\end{equation}
where $\mathcal H$ stands for the Hardy operator
\[
\mathcal H (f)(x):= \frac 1x \int_0^x f(t) d t.
\]
By duality \eqref{eq:hardy} is equivalent to that
\begin{equation}\label{eq:dualhardy}
\|\mathcal H^* (w^{\frac 1p}\chi_E) \|_{L^{p'}(\sigma)}\lesssim \mathcal N |E|^{\frac 1{p'}}
\end{equation}holds for any measurable set $E$, where $\sigma:=w^{-\frac 1{p-1}}$.

Next let us choose specific $w$ and $E$ in \eqref{eq:dualhardy}. The construction of $w$ here is actually a slight modification of the one in the proof of Theorem~\ref{swtb}. For $k=k_0(p)+1, k_0(p)+2,\ldots$, where $k_0(p)$ is the minimal integer such that $k^{p-1}< 2^{k-1}$, let $J_k=[2^k, 2^{k+1})$ and $I_k=[2^k, 2^k+k^{p-1}]$. Then define $(L_k^{-})^j$ and $(L_k^+)^j$ with the same logic, and let $L_k^j=(L_k^{-})^j\cup (L_k^{+})^j$. For sufficiently large $N$, our weight on $[0, 2^{N+2}]$ is
\begin{equation*}
w(x):=\begin{cases}
 \chi_{[0, 2^{k_0(p)+1})}(x)+ \sum\limits_{k=k_0(p)+1}^N 2^{(k+1)(p-1)}w_k(x), \quad & x\in [0, 2^{N+1}),\\
2^{(N+1)p-1}, \quad & x=2^{N+1},\\
w(2^{N+2}-x), \quad & x\in [2^{N+1}, 2^{N+2}],
 \end{cases}
\end{equation*}
where
\[
w_k(x):= 2^{k+1}\chi_{I_k}+ \sum_{j=\lfloor (p-1) \log_{2} k \rfloor}^{k} 2^{j}\chi_{L_k^{j} }+ k^{p-1} \sum_{j=1}^{\lfloor (p-1)\log_{2} k \rfloor-1}  \chi_{L_k^{j} }.
\]

With similar computations as before we get
\[
\sigma(J_k)\gtrsim k 2^{-kp'}+ k^{-p'}+ k^{-1}\ge k^{-1}
\]and
\[
[w]_{A_p}\sim N^{p-1} (\log N)^{p-1}.
\]

Take $E=\cup_{k=k_0(p)+1}^N I_k $. Then for
fixed $k$ and $x\in J_k$, we have
\begin{align*}
{\mathcal H}^* (w^{\frac 1p}\chi_E)(x)&= \int_x^{+\infty}\frac{ w^{\frac 1p}(t)\chi_E(t)}{t}d t\\
&\sim  \int_x^{+\infty} \chi_E (t) d t \sim \sum_{j=k+1}^{N} |I_j|\sim (N^p-k^p).
\end{align*}
In particular, if $k\le N/2$, then
\[
{\mathcal H}^* (w^{\frac 1p}\chi_E)(x) \sim N^p.
\]
Then it follows that
\begin{align*}
\|\mathcal H^* (w^{\frac 1p}\chi_E) \|_{L^{p'}(\sigma)}^{p'} \gtrsim  \sum_{k=k_0(p)+1}^{N/2}  N^{pp'} \sigma(J_k)\gtrsim \sum_{k=k_0(p)+1}^{N/2}  N^{pp'} k^{-1}\sim N^{pp'} \log N.
\end{align*}
Since $|E|\sim N^p$, by \eqref{eq:dualhardy} we have
\[
\mathcal N\gtrsim \frac{N^p (\log N)^{\frac 1{p'}}}{ N^{p-1}}= N  (\log N)^{\frac 1{p'}}\gtrsim \frac{[w]_{A_p}^{\frac 1{p-1}}}{(\log [w]_{A_p})^{\frac 1p}},
\]
from which $\f_{M_{p}}(t)\gtrsim t^{\frac{1}{p-1}} \big(\log (t+e) \big)^{-\frac 1p}.$
\end{proof}

\section{Calder\'on--Zygmund operators}
In this section we prove Theorems \ref{czop} and~\ref{czsh}. Let us start with some preparations needed to prove Theorem \ref{czop}. First, by Theorem \ref{spd}, it suffices
to prove this result for the sparse operator $A_{\mathcal S}$ instead of $T$, where ${\mathcal S}\subset {\mathscr D}$ and ${\mathcal S}$ is $\eta_d$-sparse.
Second, by Lemma~\ref{unsparse}, one can split ${\mathcal S}=\cup_{j=1}^{m_d}{\mathcal S}_j$, where each ${\mathcal S}_j$ will be at least $\frac{7}{8}$-sparse.
Therefore, without loss of generality, we will assume in this section that ${\mathcal S}$ is $\frac{7}{8}$-sparse.

Given a cube $Q\in {\mathcal S}$, denote ${\mathcal S}(Q):=\{Q'\in {\mathcal S}:Q'\subseteq Q\}$. We start with the following weak type estimate for the sparse operator
$A_{{\mathcal S}(Q)}$.

\begin{lemma}\label{wtsop} Let $p>2$. For every measurable set $E\subset Q$,
$$
\|A_{{\mathcal S}(Q)}(w^{-\frac{1}{p}}\chi_E)\|_{L^{p,\infty}(w)}\lesssim [w]_{A_p}^{\frac{2}{p}}\log([w]_{A_p}+e)|E|^{\frac{1}{p}}.
$$
\end{lemma}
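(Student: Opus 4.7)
The plan is to adapt the $p=1$ argument from \cite{DLR16} (the source of Lemma \ref{sppr}) to the weighted weak-type $L^{p,\infty}(w)$ setting at $p>2$.

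Set $\sigma := w^{-1/(p-1)}$, $\nu := w^{-1/p}\chi_E$, and $a_R := \nu(R)/|R|$ for $R\in {\mathcal S}(Q)$. Two pointwise estimates will be used throughout: H\"older's inequality gives
\[
a_R \le \Big(\frac{|E\cap R|}{|R|}\Big)^{1/p}\Big(\frac{\sigma(R)}{|R|}\Big)^{1/p'},
\]
and multiplying by $(w(R)/|R|)(\sigma(R)/|R|)^{p-1}\le [w]_{A_p}$ yields the key consequence
\[
w(R)\, a_R^{\,p}\le [w]_{A_p}\,|E\cap R|.
\]

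The first step is the standard principal-cube decomposition. Let ${\mathcal F}\subset {\mathcal S}(Q)$ be the collection of principal cubes adapted to $\nu$ (those at which $a_F$ doubles), which is automatically $\frac12$-sparse. The principal-cube bound gives $A_{{\mathcal S}(Q)}\nu(x)\lesssim a_{F(x)}$, where $F(x)$ is the minimal principal cube containing $x$; hence for every $\lambda>0$
\[
\lambda^p w\big(\{A_{{\mathcal S}(Q)}\nu>\lambda\}\big) \lesssim \sum_{F\in{\mathcal F}:\,a_F\gtrsim\lambda}\lambda^p\,w(F) \lesssim [w]_{A_p}\!\!\!\!\sum_{F\in{\mathcal F}:\,a_F\gtrsim\lambda}\!\!\!\!|E\cap F|.
\]

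The second step is to prove $\sum_{F\in{\mathcal F},\,a_F\gtrsim\lambda}|E\cap F|\lesssim [w]_{A_p}\log^p([w]_{A_p}+e)\,|E|$. To this end I partition ${\mathcal F}$ into dyadic layers $F_k:=\{F\in{\mathcal F}: 4^{k-1}\le a_F<4^k\}$, apply Lemma \ref{sppr} to each $F_k$ with $\varphi=\nu$ and $\gamma=4^{k-1}$ (after, if needed, invoking Lemma \ref{unsparse} to refine the $\frac12$-sparse family to the $\frac78$-sparse hypothesis of Lemma \ref{sppr}) to extract pairwise disjoint subsets $G_F^k\subset F$ with $\int_F\nu\le 8\int_{G_F^k}\nu$, and then bound the per-layer contribution. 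The disjointness of the $G_F^k$ inside $Q$ combined with the H\"older estimate written as $\int_F\nu\le 8\,\sigma(G_F^k)^{1/p'}|E\cap G_F^k|^{1/p}$ produces a per-layer bound $\sum_{F\in F_k}|E\cap F|\lesssim |E|$, and the range of layers $k$ that can contribute nontrivially is capped at $O(\log([w]_{A_p}+e))$ by the reverse H\"older inequality of Proposition \ref{rhweights} applied to $\sigma\in A_{p'}$ (with constant $[w]_{A_p}^{1/(p-1)}$ and reverse H\"older exponent $r=1+c/[w]_{A_p}^{1/(p-1)}$), which constrains how many factor-of-$4$ jumps in $a_F$ are compatible with $\sigma$'s self-improvement.

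The main obstacle lies in making the per-layer step quantitatively tight: a naive use of sparseness within a layer loses a full power of $[w]_{A_p}$, so the H\"older--disjointness combination must be executed so that the exponent of $[w]_{A_p}$ coming out of this step is exactly $1$, giving $[w]_{A_p}^2$ (matching the target $2/p$ in the operator norm once the $p$-th root is taken). Producing the logarithm at the correct power $p$ rather than $1$ or $p'$---which would be the naive outputs of a single application of Lemma \ref{sppr} or of a direct reverse H\"older estimate---is the delicate point and requires a careful coordination of the layered summation with the reverse H\"older window.
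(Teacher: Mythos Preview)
Your proposal contains two genuine gaps.

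First, the principal-cube pointwise bound $A_{{\mathcal S}(Q)}\nu(x)\lesssim a_{F(x)}$ is false for the sparse \emph{operator}. Such a bound holds for the maximal function (a supremum), but $A_{{\mathcal S}(Q)}\nu(x)=\sum_{R\in{\mathcal S}(Q),\,R\ni x}a_R$ is a sum. Take $\nu$ constant on $Q$; then every $a_R$ equals the same constant $c$, the only principal cube is $Q$ itself, yet $A_{{\mathcal S}(Q)}\nu(x)=c\cdot\#\{R\in{\mathcal S}(Q):R\ni x\}$, which is unbounded in $c$-units as the chain of cubes through $x$ lengthens. What the principal-cube method actually gives for sparse operators is a Carleson-type $L^p$ bound, not a pointwise one.

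Second, even granting some reduction to layers $F_k$, your claim that only $O(\log([w]_{A_p}+e))$ layers contribute is unjustified. With $w\equiv 1$ and $E=[0,1]$, the averages $a_F$ range over the whole dyadic scale from roughly $\lambda$ up to $1$, so the number of contributing layers is $\sim\log(1/\lambda)$, not $\sim\log([w]_{A_p}+e)$. Reverse H\"older for $\sigma$ controls oscillation of $\sigma$, not the dynamic range of the averages $a_F$; it cannot cap the number of layers.

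The paper's proof is structurally different. After normalising the level and peeling off $\{M_Q\nu>1/4\}$ via Buckley's weak bound, it decomposes the remaining cubes by layers $F_k=\{4^{-k-1}<a_{Q'}\le 4^{-k}\}$ (so $k\ge 1$), and splits at $k=N\simeq\log([w]_{A_p}+e)$. For the first $N$ layers each $A_{F_k}$ is estimated in the \emph{strong} $L^p(w)$ norm: Lemma~\ref{sppr} turns the layer into a dual pairing with $M_Q g$, and then the key input is Theorem~\ref{swtb} applied at the dual exponent $p'<2$, which supplies exactly the $[w]_{A_p}^{2/p}$ per layer. Summing $N$ layers produces the logarithm to the power $p$. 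The tail $k>N$ is handled by the sparse exponential-level-set estimate combined with the sharp reverse H\"older inequality, which gives the decay $e^{-c2^k/[w]_{A_p}}$. Your sketch never invokes Theorem~\ref{swtb}, which is the engine behind the exponent $2/p$; without it there is no mechanism in your argument that singles out this exponent.
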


\begin{proof} Since the $A_p$-constant is invariant under pointwise multiplication by a constant, it suffices to show that
\begin{equation}\label{its}
w\{x\in Q: A_{{\mathcal S}(Q)}(w^{-\frac{1}{p}}\chi_E)(x)>2\}\lesssim [w]_{A_p}^{2}\log^p([w]_{A_p}+e)|E|.
\end{equation}

Let $M_Q$ denote the maximal operator restricted to a cube $Q$. By the weak type bound of $M$ (see \cite{B93}),
$$
w\{x\in Q:M_Q(w^{-\frac{1}{p}}\chi_E)(x)>1/4\}\lesssim [w]_{A_p}|E|.
$$
Therefore, setting
$$G:=\{x\in Q: A_{{\mathcal S}(Q)}(w^{-\frac{1}{p}}\chi_E)(x)>2, M_Q(w^{-\frac{1}{p}}\chi_E)(x)\le 1/4\},$$
we obtain
\begin{equation}\label{prst}
w\{x\in Q: A_{{\mathcal S}(Q)}(w^{-\frac{1}{p}}\chi_E)(x)>2\}\lesssim [w]_{A_p}|E|+w(G).
\end{equation}

Denote
$$F_k:=\{Q'\in {\mathcal S}(Q):4^{-k-1}<\frac{1}{|Q'|}\int_{Q'}w^{-\frac{1}{p}}\chi_E\le 4^{-k}\}.$$
Then, for $x\in G$,
$$A_{{\mathcal S}(Q)}(w^{-\frac{1}{p}}\chi_E)(x)=\sum_{k=1}^{\infty}A_{F_k}(w^{-\frac{1}{p}}\chi_E)(x).$$
From this, for a natural $N$ which will be chosen later on we have
\begin{eqnarray*}
w(G)&\le& w\{x\in Q: \sum_{k=1}^{N}A_{F_k}(w^{-\frac{1}{p}}\chi_E)(x)>1\}\\
&+&w\{x\in Q: \sum_{k=N+1}^{\infty}A_{F_k}(w^{-\frac{1}{p}}\chi_E)(x)>1\}:=I+II.
\end{eqnarray*}

Let us start by estimating $I$. We have
$$I^{\frac{1}{p}}\le \sum_{k=1}^N\|A_{F_k}(w^{-\frac{1}{p}}\chi_E)\|_{L^p(w)}.$$
By Lemma \ref{sppr}, there exist pairwise disjoint (for $Q'\in F_k$) sets $G_{Q'}\subset Q'$ such that
$$A_{F_k}(w^{-\frac{1}{p}}\chi_E)(x)\le 8\sum_{Q'\in F_k}\Big(\frac{1}{|Q'|}\int_{G_{Q'}}(w^{-\frac{1}{p}}\chi_E)\Big)\chi_{Q'}(x).$$
Hence, by H\"older's inequality for weak norms,
\begin{eqnarray*}
&&\int_QA_{F_k}(w^{-\frac{1}{p}}\chi_E)g\le 8\int_{Q}\Big(\sum_{Q'\in F_k}\Big(\frac{1}{|Q'|}\int_{Q'}g\Big)\chi_{G_{Q'}}\Big)w^{-\frac{1}{p}}\chi_E\\
&&\le 8\int_Q(M_Qg)w^{-\frac{1}{p}}\chi_E\le 8\|(M_Qg)w^{-\frac{1}{p}}\|_{L^{p',\infty}}|E|^{\frac{1}{p}}.
\end{eqnarray*}

By Theorem \ref{swtb} (note that $p'<2$),
$$\|(M_Qg)w^{-\frac{1}{p}}\|_{L^{p',\infty}}\lesssim [w^{-p'/p}]_{A_{p'}}^{\frac{2}{p'}}\|g\|_{L^{p'}(w^{-p'/p})}=[w]_{A_p}^{\frac{2}{p}}\|g\|_{L^{p'}(w^{-p'/p})}.$$
Combining this with the previous estimate, we obtain by duality that
$$\|A_{F_k}(w^{-\frac{1}{p}}\chi_E)\|_{L^{p}(w)}\lesssim [w]_{A_p}^{\frac{2}{p}}|E|^{\frac{1}{p}},$$
which implies
\begin{equation}\label{estI}
I\lesssim N^p[w]_{A_p}^{2}|E|.
\end{equation}

Turn to $II$. Since $\sum_{k=N+1}^{\infty}2^{-k}<1$, we have
\begin{eqnarray*}
II&\le& w\{x\in Q: \sum_{k=N+1}^{\infty}A_{F_k}(w^{-\frac{1}{p}}\chi_E)(x)>\sum_{k=N+1}^{\infty}2^{-k}\}\\
&\le& \sum_{k=N+1}^{\infty}w\{x\in Q: A_{F_k}(w^{-\frac{1}{p}}\chi_E)(x)>2^{-k}\}\\
&\le& \sum_{k=N+1}^{\infty}w\{x\in Q: 4^{-k}\sum_{Q'\in F_k}\chi_{Q'}(x)>2^{-k}\}\\
&\le& \sum_{k=N+1}^{\infty}\sum_{R\in F_k^m}w\{x\in R: \sum_{Q'\in F_k(R)}\chi_{Q'}(x)>2^k\},
\end{eqnarray*}
where $F_k^m$ stands for the maximal cubes of $F_k$ (hence they are pairwise disjoint).

By sparseness, there is an absolute $c>0$ such that
$$|\{x\in R: \sum_{Q'\in F_k(R)}\chi_{Q'}(x)>2^k\}|\le e^{-c2^k}.$$
Combining this with the sharp quantitative reverse H\"older property expressed in (\ref{srh}), we obtain
$$w\{x\in R: \sum_{Q'\in F_k(R)}\chi_{Q'}(x)>2^k\}\le e^{-\frac{c2^k}{[w]_{A_p}}}w(R).$$
It follows that
$$II\le \sum_{k=N+1}^{\infty}4^{kp}e^{-\frac{c2^k}{[w]_{A_p}}}\sum_{R\in F_k^m}4^{-kp}w(R).$$

By the disjointness of $R$ and H\"older's inequality,
\begin{eqnarray*}
&&\sum_{R\in F_k^m}4^{-kp}w(R)\le 4^p\sum_{R\in F_k^m}\Big(\frac{1}{|R|}\int_Rw^{-\frac{1}{p}}\chi_E\Big)^pw(R)\\
&&\le 4^p\sum_{R\in F_k^m}\Big(\frac{1}{|R|}\int_Rw^{-p'/p}\Big)^{p/p'}\frac{w(R)}{|R|}|E\cap R|\\
&&\le 4^p[w]_{A_p}\sum_{R\in F_k^m}|E\cap R|\le 4^p[w]_{A_p}|E|.
\end{eqnarray*}
Thus, we obtain
$$II\lesssim [w]_{A_p}|E|\sum_{k=N+1}^{\infty}4^{kp}e^{-\frac{c2^k}{[w]_{A_p}}}.$$

Now observe that one can choose $N\simeq \log([w]_{A_p}+e)$ so that
$$4^{kp}e^{-\frac{c2^k}{[w]_{A_p}}}\le 2^{-k}$$
for every $k\ge N$. Then we obtain that
$$II\lesssim [w]_{A_p}|E|.$$

Combining this estimate with (\ref{estI}) yields
$$w(G)\lesssim [w]_{A_p}^2\log^p([w]_{A_p}+e)|E|.$$
This along with (\ref{prst}) proves (\ref{its}), and therefore the proof is complete.
\end{proof}

\begin{remark}\label{glv} Observe that Lemma \ref{wtsop} implies easily its global version with $Q={\mathbb R}^d$ and an arbitrary sparse family ${\mathcal S}$. Indeed, by the limiting argument,
one can assume that ${\mathcal S}$ is finite. Then one can write ${\mathcal S}=\cup_j{\mathcal S}(Q_j)$, where $Q_j$ are the maximal cubes of ${\mathcal S}$, and apply (\ref{its}) for each $Q_j$.
\end{remark}

An important ingredient in the proof of Theorem \ref{czop} is the following
equivalence relation of Cascante--Ortega--Verbitsky \cite{COV04} saying that for every dyadic lattice ${\mathscr D}$ and a non-negative sequence
$\{\la_Q\}_{Q\in {\mathscr D}}$,
\begin{equation}\label{cov}
\Big\|\sum_{Q\in {\mathscr D}}\la_Q\chi_Q\Big\|_{L^p(w)}\simeq \Big(\sum_{Q\in {\mathscr D}}\la_Q\Big(\frac{1}{w(Q)}\sum_{Q'\in {\mathscr D}, Q'\subseteq Q}\la_{Q'}w(Q')\Big)^{p-1}w(Q)\Big)^{\frac{1}{p}}.
\end{equation}

\begin{proof}[Proof of Theorem \ref{czop}]
Let ${\mathcal N}$ denote the best possible constant in the inequality
$$\|w^{\frac{1}{p}}A_{\mathcal S}(fw^{-\frac{1}{p}})\|_{L^{p,\infty}}\le {\mathcal N}\|f\|_{L^p}.$$
By duality this is equivalent to that for every measurable set $E$,
$$\|A_{\mathcal S}(w^{\frac{1}{p}}\chi_E)\|_{L^{p'}(\si)}\lesssim {\mathcal N}|E|^{\frac{1}{p'}},$$
where $\si:=w^{-\frac{1}{p-1}}$.
Further, by (\ref{cov}),
\begin{equation}\label{cov1}
\|A_{\mathcal S}(w^{\frac{1}{p}}\chi_E)\|_{L^{p'}(\si)}\lesssim
\Big(\sum_{Q\in {\mathcal S}}\la_Q\Big(\frac{1}{\si(Q)}\sum_{Q'\in {\mathcal S}(Q)}\la_{Q'}\si(Q')\Big)^{\frac{1}{p-1}}\si(Q)\Big)^{\frac{1}{p'}},
\end{equation}
where $\la_Q:=\frac{1}{|Q|}\int_Q(w^{\frac{1}{p}}\chi_E)dx$.

By H\"older's inequality for weak norms,
\begin{equation}\label{hwn}
\sum_{Q'\in {\mathcal S}(Q)}\la_{Q'}\si(Q')=\int_QA_{\mathcal S(Q)}(w^{\frac{1}{p}}\chi_E)\si\le \|A_{{\mathcal S}(Q)}(w^{\frac{1}{p}}\chi_E)\|_{L^{p',\infty}(\si)}\si(Q)^{\frac{1}{p}}.
\end{equation}
Next, by Lemma \ref{wtsop} (note that $p'>2$),
\begin{eqnarray*}
&&\|A_{{\mathcal S}(Q)}(w^{\frac{1}{p}}\chi_E)\|_{L^{p',\infty}(\si)}=\|A_{{\mathcal S}(Q)}(\si^{-\frac{1}{p'}}\chi_E)\|_{L^{p',\infty}(\si)}\\
&&\lesssim [\si]_{A_{p'}}^{\frac{2}{p'}}\log([\si]_{A_{p'}}+e)|E\cap Q|^{\frac{1}{p'}}\simeq [w]_{A_p}^{\frac{2}{p}}\log([w]_{A_p}+e)|E\cap Q|^{\frac{1}{p'}}.
\end{eqnarray*}

Combining this estimate with (\ref{hwn}) and (\ref{cov1}) yields
\begin{equation}\label{fs}
\|A_{\mathcal S}(w^{\frac{1}{p}}\chi_E)\|_{L^{p'}(\si)}\lesssim C([w]_{A_p})\Big(\sum_{Q\in {\mathcal S}}
\Big(\frac{1}{|Q|}\int_Q(w^{\frac{1}{p}}\chi_E)\Big)\si(Q)^{\frac{1}{p'}}|E\cap Q|^{\frac{1}{p}}\Big)^{\frac{1}{p'}},
\end{equation}
where
$$C([w]_{A_p}):=[w]_{A_p}^{\frac{2}{p^2}}\log^{\frac{1}{p}}([w]_{A_p}+e).$$

Let $r:=1+\frac{1}{c_d[w]_{A_p}}$ as in Proposition \ref{rhweights}. Then
\begin{eqnarray*}
\frac{1}{|Q|}\int_Q(w^{\frac{1}{p}}\chi_E)&\le& \Big(\frac{1}{|Q|}\int_Qw^r\Big)^{\frac{1}{pr}}\left(\frac{|E\cap Q|}{|Q|}\right)^{\frac{1}{(pr)'}} \\
&\le& 2^{1/p}\Big(\frac{1}{|Q|}\int_Qw\Big)^{\frac{1}{p}}\left(\frac{|E\cap Q|}{|Q|}\right)^{\frac{1}{(pr)'}}
\end{eqnarray*}
Therefore,
$$
\sum_{Q\in {\mathcal S}}
\Big(\frac{1}{|Q|}\int_Q(w^{\frac{1}{p}}\chi_E)\Big)\si(Q)^{\frac{1}{p'}}|E\cap Q|^{\frac{1}{p}}\lesssim [w]_{A_p}^{\frac{1}{p}}\sum_{Q\in {\mathcal S}}
\left(\frac{|E\cap Q|}{|Q|}\right)^{\frac{1}{p}+\frac{1}{(pr)'}}|Q|.
$$

By sparseness, take pairwise disjoint sets $E_Q\subset Q$ such that $|E_Q|\simeq |Q|$. Then
\begin{eqnarray*}
&&\sum_{Q\in {\mathcal S}}
\left(\frac{|E\cap Q|}{|Q|}\right)^{\frac{1}{p}+\frac{1}{(pr)'}}|Q|\lesssim \sum_{Q\in {\mathcal S}}\int_{E_Q}(M\chi_E)^{1+\frac{1}{p}(1-\frac{1}{r})}dx\\
&&\le \int_{{\mathbb R}^d}(M\chi_E)^{1+\frac{1}{p}(1-\frac{1}{r})}\lesssim \frac{1}{r-1}|E|\lesssim [w]_{A_p}|E|.
\end{eqnarray*}

Combining the two previous estimates with (\ref{fs}), we obtain
$$\|A_{\mathcal S}(w^{\frac{1}{p}}\chi_E)\|_{L^{p'}(\si)}\lesssim C([w]_{A_p})[w]_{A_p}^{(1+\frac{1}{p})\frac{1}{p'}}|E|^{\frac{1}{p'}}.$$
Therefore,
$${\mathcal N}\lesssim C([w]_{A_p})[w]_{A_p}^{(1+\frac{1}{p})\frac{1}{p'}}=[w]_{A_p}^{1+\frac{1}{p^2}}\log^{\frac{1}{p}}([w]_{A_p}+e),$$
which completes the proof.
\end{proof}

Turn to the proof of Theorem \ref{czsh}. As we mentioned in the Introduction, the example used here is much simpler than in the previous proofs.

\begin{proof}[Proof of Theorem \ref{czsh}] Denote
$${\mathcal N}:=\|H_{p,w}\|_{L^p\to L^{p,\infty}}.$$
Then, by duality,
$$\|w^{-\frac{1}{p}}H(w^{\frac{1}{p}}\chi_E)\|_{L^{p'}}\lesssim {\mathcal N}|E|^{\frac{1}{p'}}\quad(p\ge 2).$$
Taking here $E=(0,1)$, we obtain
\begin{equation}\label{yef}
\Big(\int_0^1w^{\frac{1}{p}}dx\Big)\Big(\int_2^{\infty}\frac{w^{-\frac{1}{p-1}}(x)}{x^{p'}}dx\Big)^{\frac{1}{p'}}\lesssim {\mathcal N}.
\end{equation}

Let $w_{\e}$ be a radial weight on ${\mathbb R}$ defined on $[0,\infty)$ by
$$w_{\e}(x):=
\begin{cases} \e^{-1},&0\le x\le 1,\\
x^{-(1-\e)},&x>1.
\end{cases}
$$
Then the left-hand side of (\ref{yef}) is equivalent to $\e^{-1}$. Therefore, the lower bound $\f_{H_{p}}(t)\gtrsim t$ would follow
if we show that $[w_{\e}]_{A_p}\lesssim \e^{-1}.$

Denote
$$A_p(w_{\e};I):=\Big(\frac{1}{|I|}\int_Iw_{\e}\Big)\Big(\frac{1}{|I|}\int_Iw_{\e}^{-\frac{1}{p-1}}\Big)^{p-1}.$$
Since $w_{\e}$ is radial, it suffices to consider the intervals $I\subset [0,\infty)$.

Denote $v_{\e}(x):=|x|^{-(1-\e)}$. We will use the well known fact that $[v_{\e}]_{A_p}\simeq \e^{-1}.$ Hence, the case where $I\subset (1,\infty)$ is trivial.
Suppose that $I\cap[0,1]\not=\emptyset$. Then we have to consider only two cases. Assume that $|I|\le 1$. Then
$$A_p(w_{\e};I)\le \frac{\sup_Iw_{\e}}{\inf_Iw_{\e}}\lesssim \e^{-1}.$$
Suppose now that $|I|>1$. Then
$$A_p(w_{\e};I)\lesssim \sup_{h>2}A_p(w_{\e};(0,h)).$$
Observe that for $h>2$,
$$\frac{1}{h}\int_0^hw_{\e}=\frac{1}{\e}h^{\e-1}$$
and
$$\Big(\frac{1}{h}\int_0^hw_{\e}^{-\frac{1}{p-1}}\Big)^{p-1}\lesssim \Big(\frac{1}{h}\big(\e^{\frac{1}{p-1}}+h^{\frac{p-\e}{p-1}}\big)\Big)^{p-1}\lesssim h^{1-\e}.$$
Therefore,
$$\sup_{h>1}A_p(w_{\e};(0,h))\lesssim \frac{1}{\e},$$
and the proof is complete.
\end{proof}


\begin{thebibliography}{99}
\bibitem{B93}
S.M. Buckley, {\it Estimates for operator norms on weighted spaces
and reverse Jensen inequalities}, Trans. Amer. Math. Soc., {\bf 340}
(1993), no. 1, 253--272.

\bibitem{COV04}
C. Cascante, J.M. Ortega, and I.E. Verbitsky, {\it Nonlinear potentials and two
weight trace inequalities for general dyadic and radial kernels}, Indiana Univ.
Math. J. {\bf 53} (2004), no. 3, 845--882.

\bibitem{CG01}
M. Christ and M. Goldberg, {\it Vector $A_2$ weights and a Hardy--Littlewood maximal function},
Trans. Amer. Math. Soc. {\bf 353} (2001), no. 5, 1995--2002.

\bibitem{CIM18}
D. Cruz-Uribe, J. Isralowitz and K. Moen, {\it Two weight bump conditions for matrix weights},
Integr. Equ. Oper. Theory {\bf 90} (2018), no. 3. Paper No. 36, 31 pp.

\bibitem{CUIMPRR21}
D. Cruz-Uribe, J. Isralowitz, K. Moen, S. Pott and I.P. Rivera-R\'ios, {\it Weak endpoint bounds for matrix weights}. Rev. Mat. Iberoam. {\bf 37} (2021), no. 4, 1513--1538.

\bibitem{CUMP05}
D. Cruz-Uribe, J.M. Martell and C. P\'erez, {\it Weighted weak-type inequalities and a conjecture of Sawyer}, Int. Math. Res. Not. 2005, no. 30, 1849--1871.

\bibitem{CUS23}
D. Cruz-Uribe and B. Sweeting, {\it Weighted weak-type inequalities for maximal operators and singular integrals}, preprint.
Available at https://arxiv.org/abs/2311.00828

\bibitem{DLR16}
C. Domingo-Salazar, M.T. Lacey and  G. Rey, {\it Borderline weak type estimates for singular integrals and square functions},
Bull. Lond. Math. Soc., {\bf 48} (2016), no. 1, 63--73.

\bibitem{G03}
M. Goldberg, {\it Matrix $A_p$ weights via maximal functions}, Pacific J. Math. {\bf 211} (2003), no. 2, 201--220.

\bibitem{IM19}
J. Isralowitz and K. Moen, {\it Matrix weighted Poincar\'e inequalities and applications to degenerate elliptic systems}, Indiana Univ. Math. J. {\bf 68} (2019), no. 5, 1327--1377.

\bibitem{IPRR21}
J. Isralowitz, S. Pott and I.P. Rivera-R\'ios, {\it Sharp $A_1$ weighted estimates for vector valued operators}, J. Geom. Anal. {\bf 31} (2021), no. 3, 3085--3116.

\bibitem{H12}
T.P. Hyt\"onen, {\it The sharp weighted bound for general Calder\'on--Zygmund operators}, Ann. of Math. (2) {\bf 175} (2012), no. 3, 1473--1506.

\bibitem{HP13}
T.P. Hyt\"onen and C. P\'erez, {\it Sharp weighted bounds involving $A_{\infty}$}, Anal. PDE {\bf 6} (2013), no. 4, 777--818.

\bibitem{LLOR23}
A.K. Lerner, K. Li, S. Ombrosi and  I.P. Rivera-R\'ios, {\it On the sharpness of some matrix weighted endpoint estimates}, preprint.
Available at https://arxiv.org/abs/2310.06718

\bibitem{LN19}
A.K. Lerner and F. Nazarov, {\it Intuitive dyadic calculus: the basics}, Expo. Math. {\bf 37} (2019), no. 3, 225--265.

\bibitem{LO20}
A.K. Lerner and S. Ombrosi, {\it Some remarks on the pointwise sparse domination}, J. Geom. Anal., {\bf 30} (2020), no. 1, 1011--1027.

\bibitem{LOP19}
K. Li, S. Ombrosi and C. P\'erez, {\it Proof of an extension of E. Sawyer's conjecture about weighted mixed weak-type estimates}. Math. Ann. {\bf 374} (2019), no. 1-2, 907--929.

\bibitem{MW77}
B. Muckenhoupt and R.L. Wheeden, {\it Some weighted weak-type inequalities for the Hardy-Littlewood maximal function and the Hilbert transform}, Indiana Univ. Math. J. {\bf 26} (1977), no. 5, 801--816.

\bibitem{NPTV17}
F. Nazarov, S. Petermichl, S. Treil and A. Volberg, {\it Convex body domination and weighted estimates with matrix weights}, Adv. Math. {\bf 318} (2017), 279--306.

\bibitem{NSS24}
Z. Nieraeth, C. Stockdale and  B. Sweeting, {\it Weighted weak-type bounds for multilinear singular integrals}, preprint. Available at https://arxiv.org/abs/2401.15725. 

\bibitem{R03}
S. Roudenko, {\it Matrix-weighted Besov spaces}, Trans. Amer. Math. Soc. {\bf 355} (2003), no. 1, 273--314.

\bibitem{S85}
E. Sawyer, {\it A weighted weak type inequality for the maximal function}, Proc. Amer. Math. Soc. {\bf 93} (1985), no. 4, 610--614.

\end{thebibliography}
\end{document}